\numberwithin{equation}{section}
\newtheorem{thm}{Theorem}[section]
\newtheorem{lemma}[thm]{Lemma}
\newtheorem{proposition}[thm]{Proposition}
\newtheorem{corollary}[thm]{Corollary}
\newtheorem{definition}[thm]{Definition}
\newtheorem{remark}[thm]{Remark}
\newtheorem{conjecture}[thm]{Conjecture}
\newtheorem{notation}[thm]{Notation} 
\newtheorem{convention}[thm]{Convention} 
\newtheorem{ex}[thm]{Example}
\newtheorem{claim}[thm]{Claim}
\newtheorem*{acknowledgments}{Acknowledgments}
\numberwithin{figure}{section}
\newcommand{\lr}[1]{\stackrel{\scriptsize{\raisebox{.5ex}[0pt][.5ex]{$#1$}}}{\rightarrow}}
 \newcommand{\dessin}[2]{
  \vcenter{\hbox{\includegraphics[height=#1]{#2.pdf}}}}
\begin{document} 
\title[Finite type invariants of welded string links and ribbon tubes]{On finite type invariants of welded string links and ribbon tubes} 
\author[A. Casejuane ]{Adrien Casejuane} 
\author[J.B. Meilhan ]{Jean-Baptiste Meilhan} 
\address{Univ. Grenoble Alpes, CNRS, IF, 38000 Grenoble, France}
\email{adrien.casejuane@univ-grenoble-alpes.fr}
\email{jean-baptiste.meilhan@univ-grenoble-alpes.fr}

\subjclass{57M27, 57M25, 57Q45}

\keywords{welded knotted objects, ribbon knotted surfaces in $4$-space, finite type invariants}

\maketitle

\begin{abstract} 
Welded knotted objects are a combinatorial extension of knot theory, which can be used as a tool for studying ribbon surfaces in $4$-space. 
A finite type invariant theory for ribbon knotted surfaces was developed by Kanenobu, Habiro and Shima, and this paper proposes a study of these invariants, 
using welded objects. Specifically, we study welded string links up to $w_k$-equivalence, which is an equivalence relation  
introduced by Yasuhara and the second author in connection with finite type theory. 
In low degrees, we show that this relation characterizes the information contained by finite type invariants. 
We also study the algebraic structure of welded string links up to $w_k$-equivalence. 
All results have direct corollaries for ribbon knotted  surfaces. 
\end{abstract} 
\section{Introduction}
In the study of knotted surfaces, \emph{i.e.} smooth embeddings of $2$-dimensional manifolds in $4$-space, the class of ribbon surfaces has proved to be of particular interest. 
These are the analogue of ribbon knots in $3$-space, as defined by Fox in the sixties, 
in the sense that such knotted surfaces bound immersed $3$-manifolds with only one fixed topological type of so-called ribbon singularities.
Ribbon knotted surfaces were extensively studied from the early days higher dimensional knot theory, notably through the work of Yajima \cite{Yajima,Yaji} and Yanagawa \cite{yanagawa,yanagawa2,yanagawa3}. 
One nice feature of ribbon knotted surfaces is that they admit a natural notion of 'crossing change'.
Indeed, one can always arrange such surfaces so that the only crossings occur along circles of double points: swapping the over/under information along such a circle then yields a new ribbon surface. 
Just like the usual crossing change can be used to define Vassiliev knot invariants, Kanenobu, Habiro and Shima used this  local move to introduce a theory of finite type invariants for ribbon knotted surfaces \cite{Habiro-Kanenobu-Shima, KS}. For ribbon $2$-knots, Habiro and Shima further showed that finite type invariants are completely determined by the (normalized) Alexander polynomial.

This paper aims at characterizing, in a similar way, finite type invariants of ribbon tubes, which are ribbon knotted annuli in the $4$-ball whose boundary is given by fixed copies of the unlink. 
These objects were introduced in \cite{Audoux-Bellingeri-Meilhan-Wagner2018}, as a higher dimensional analogue of  string links.  
For $1$-component ribbon tubes, the situation is strictly the same as for ribbon $2$-knots, in the sense that all finite type invariants come from the coefficients $\alpha_k$ $(k\ge 2$) of the normalized Alexander polynomial \cite{Meilhan-Yasuhara}. For ribbon tubes of more components, a number of finite type invariants can be derived from this polynomial, as follows. Given a sequence $R$ of possibly overlined indices, there is a canonical procedure to connect the various components of a ribbon tube into a single annulus, and evaluating $\alpha_k$ on the latter yields a degree $k$ finite type invariant $\mathcal{I}_{R;k}$ of the initial ribbon tube, called a \emph{closure invariant}, see Definition \ref{def:closure}. 
Another family of finite type invariants of ribbon tubes is given by the higher-dimensional Milnor invariants defined in \cite{Audoux-Bellingeri-Meilhan-Wagner2018}.  
The first of these invariants is Milnor invariant $\mu(ij)$,  which in effect is  the (nonsymmetric) linking number of component $i$ with component $j$. The main result of this paper can be stated as follows.
\begin{thm}\label{thmain}
Let $T$ and $T'$ be two $n$-component ribbon tubes. The following are equivalent for $k \in \{2;3\}$. 
\begin{enumerate}
    \item $T$ and $T'$ are $RC_k$-equivalent. 
    \item For any finite type invariant $\nu$ of degree $< k$, we have $\nu(T) = \nu(T')$. 
    \item $T$ and $T'$ cannot be distinguished by the following invariants: 
    \begin{itemize}
		\item[$\bullet$ ] If $k=2$:  linking numbers $\mu(ij)$, for all $i\neq j$.
		\item[$\bullet$ ] If $k=3$:  linking numbers  $\mu(ij)$  for all $i\neq j$, and closure invariants 
		$\mathcal{I}_{(i) ; 2}$ for all $i$, $\mathcal{I}_{(j, i) ; 2}$, $\mathcal{I}_{(i, \overline{j}) ; 2}$ and  
			 $\mathcal{I}_{(\overline{i}, j) ; 2}$ for all $i<j$, and 
			 $\mathcal{I}_{(\overline{j}, i, \overline{k}) ; 2}$ for all pairwise distinct $i,j,k$ such that $j < k$.  
      \end{itemize}
\end{enumerate}
\end{thm}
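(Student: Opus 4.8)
The plan is to transport the entire statement to the combinatorial world of welded string links, where the calculus of $w_k$-equivalence and arrow/tree operations is available. By the Tube isomorphism of Audoux--Bellingeri--Meilhan--Wagner, $n$-component ribbon tubes correspond (as a monoid) to welded string links on $n$ strands, and under this dictionary the $RC_k$-move matches the $w_k$-equivalence while the closure invariants and linking numbers $\mu(ij)$ pull back to the corresponding welded invariants. It therefore suffices to prove the three conditions equivalent for welded string links modulo $w_k$-equivalence, and I would do so cyclically as $(1)\Rightarrow(2)\Rightarrow(3)\Rightarrow(1)$.

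For $(1)\Rightarrow(2)$ the essential input is the compatibility of the $w_k$-filtration with the Vassiliev filtration: a single $w_k$-move changes a welded string link by an element lying in the degree-$k$ part of the relevant filtration, which is annihilated by every finite type invariant of degree $<k$. The implication $(2)\Rightarrow(3)$ is then pure bookkeeping, since each invariant in the list has degree $<k$: the linking numbers $\mu(ij)$ are of degree $1$, and, as recalled in the introduction, each closure invariant $\mathcal{I}_{R;2}$ obtained by evaluating $\alpha_2$ is of degree $2$. Thus agreement of all degree-$<k$ invariants forces agreement of those listed in the respective cases $k=2$ and $k=3$.

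The substance lies in $(3)\Rightarrow(1)$, which amounts to classifying welded string links up to $w_k$-equivalence for $k\in\{2,3\}$ and checking that the listed invariants separate the classes. Modulo $w_k$-equivalence, every welded string link is represented by a product of $w$-trees of degree $<k$. For $k=2$ only degree-$1$ trees (arrows) survive, and I would show that the quotient is a free abelian group detected exactly by the signed arrow counts $\mu(ij)$ with $i\neq j$. For $k=3$ one also retains degree-$2$ trees, and I would compute the values of the five families $\mathcal{I}_{(\overline{j},i,k);2}$, $\mathcal{I}_{(i,j);2}$, $\mathcal{I}_{(i,\overline{j});2}$, $\mathcal{I}_{(\overline{i},j);2}$ and $\mathcal{I}_{(i);2}$ on a generating set of degree-$2$ trees, verifying that the resulting linear system, together with the $\mu(ij)$ governing the degree-$1$ part, determines all tree multiplicities; equality of these invariants then forces the two normal forms to coincide up to $w_3$-equivalence.

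The main obstacle is precisely this degree-$3$ classification. One must exhibit an honest generating set of welded string links modulo $w_3$-equivalence, control the relations among degree-$2$ $w$-trees (orientation reversal, sliding, and the IHX/STU-type relations of the arrow presentation, with the index constraints $i\neq j$ and $j<k$ serving to eliminate redundancies), and then show that the matrix recording how the five families of closure invariants evaluate on these generators has full rank. Establishing this full rank --- so that the chosen invariants genuinely separate all $w_3$-classes and no further invariant is needed --- is the crux of the argument and the step I expect to demand the most care.
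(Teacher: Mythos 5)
There is a genuine gap, and it lies precisely in the bridge you use to move between the two worlds. You invoke a ``Tube isomorphism'' under which ribbon tubes correspond to welded string links and the $RC_k$-equivalence ``matches'' the $w_k$-equivalence. No such isomorphism is available: Satoh's map $\textrm{Tube}\colon wSL(n)\to \textrm{rT}(n)$ is only known to be a \emph{surjective} monoid homomorphism (its injectivity is a well-known open problem), and the relation between the two equivalences is only known in one direction, namely that $w_k$-equivalent welded string links have $RC_k$-equivalent images under Tube. The converse --- that $RC_k$-equivalent tubes admit $w_k$-equivalent welded lifts --- is not known, and is essentially of the same nature as the open conjectures in this area. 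Consequently your implication $(1)\Rightarrow(2)$ for ribbon tubes does not follow from the welded statement: given $RC_k$-equivalent tubes $T,T'$, surjectivity lets you choose lifts $L,L'$, but nothing allows you to conclude that $L$ and $L'$ are $w_k$-equivalent, so the welded argument never gets started. The paper instead takes $(1)\Rightarrow(2)$ as an independent, genuinely four-dimensional input: Watanabe's theorem that $RC_k$-equivalent ribbon knotted surfaces share all finite type invariants of degree $<k$. Similarly, your $(2)\Rightarrow(3)$ needs the listed invariants to be finite type invariants \emph{of ribbon tubes}, which requires the compatibility of the two finite type theories (a virtualization move on a welded diagram maps under Tube to a crossing change at a crossing circle), not merely the welded degree counts.

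The only implication that can be transported by surjectivity alone is $(3)\Rightarrow(1)$, and this is exactly how the paper uses the diagrammatic results: lift $T,T'$ to welded string links, use the compatibilities $\mathcal{I}_{R;k}(L)=\mathcal{I}_{R;k}(\textrm{Tube}(L))$ and $\mu_L(ij)=\mu_{\textrm{Tube}(L)}(ij)$ to see that the lifts share the listed invariants, apply Corollaries \ref{corow2} and \ref{corow3} to get $w_k$-equivalence of the lifts, and then push forward via the one known direction ($w_k$-equivalence implies $RC_k$-equivalence of Tube images). Your welded-level program --- Proposition \ref{thm:wkFTI} for $(1)\Rightarrow(2)$, degree bookkeeping for $(2)\Rightarrow(3)$, and normal forms up to $w_2$- and $w_3$-equivalence with a full-rank evaluation of the closure invariants on generators for $(3)\Rightarrow(1)$ --- is indeed the substance of the paper (Theorems \ref{classiw2nbr} and \ref{classiw3nbr}), and you correctly identify the degree-$2$ tree relations and the rank computation as the crux there. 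But as written, the reduction of the ribbon-tube statement to that welded statement is not valid; repairing it requires citing Watanabe's theorem and the Tube-compatibility of finite type invariants as separate ingredients.
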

Here, the $RC_k$-equivalence is an equivalence relation introduced by Watanabe in \cite{Watanabe} which, by the above, characterizes the information contained by finite type invariants of ribbon tubes of degree $<3$. 
This relation, discussed in Section \ref{sec:ribbon},  is an analogue in higher dimensions of Habiro's $C_k$-equivalence for usual knotted objects \cite{Habiro}. 
Watanabe showed that for ribbon $2$--knots, the $RC_k$-equivalence characterizes the information contained by all finite type invariants of degree $<k$ \cite[Thm.~1.1]{Watanabe}. 

Note that according to Theorem \ref{thmain}, any degree $2$ invariant of ribbon tubes can be expressed as a combination of linking numbers and closure invariants: we give such a formula for length $3$ Milnor invariants in Proposition \ref{mu_123}.
Note also that the case $k=2$ above was essentially already known by \cite{ABMW3}, see \cite[\S~7.2]{Meilhan-Yasuhara}.

In Section \ref{sec:A}, we investigate degree $3$ invariants in the $2$-component case. 
The classification result is only given modulo a conjectured relation, but it appears already at this stage that closure invariants no longer suffice to generate all degree $3$ invariants, since the classification also involves length $4$ Milnor invariants. 
See Remark \ref{rmqpourplustard}. 
An extensive study of degree $3$ invariants of ribbon tubes, and discussions on the perspectives that it opens, can be found in \cite{Casejuane}.
\medskip

The results of this paper on ribbon knotted surfaces are all obtained as consequences of diagrammatic results.
Another remarkable feature of ribbon surfaces is indeed that they can be described and studied using welded objects, which are a quotient of virtual knot theory, see Section \ref{sec:welded}. 
Early works of Yajima \cite{Yajima} highlighted the fact that all relations in the fundamental group of the complement of a ribbon $2$--knot can be  encoded using the usual diagrammatics of knot theory. This key observation was later completed and formalized by Satoh, as a surjective map from welded objects to ribbon knotted surfaces \cite{Satoh2000}. 
Hence the core of the present paper is a characterization of finite type invariants of welded string links. The theory of finite type invariants for welded objects is based on the virtualization move, which replaces a classical crossing by a virtual one, and the above-mentioned closure and Milnor invariants do have a strict analogue for welded string links, with the same finite type properties, see Section \ref{sec:inv}.. 

Our main tool will be the \emph{arrow calculus} developed in \cite{Meilhan-Yasuhara}, which is a welded analogue of Habiro's clasper calculus \cite{Habiro}. 
In particular, a family of finer and finer equivalence relations on welded objects called $w_k$-equivalence is defined in  \cite{Meilhan-Yasuhara}, which is closely related to finite type theory. It is indeed known that two $w_k$-equivalent welded objects cannot be distinguished by any finite type invariant of degree $<k$. 
The converse implication also holds for welded (long) knots, thus fully characterizing the information contained by finite type invariants of these objects, and it is conjectured that such an equivalence also holds for welded string links (Conjecture \ref{conjek}).
This is a natural analogue of the Goussarov-Habiro conjecture for finite type invariants of string links and homology cylinders \cite{Habiro} and, as a matter of fact, our results amount to verify this conjecture at low degree.
Indeed, our main diagrammatical results are classifications of welded string links up to $w_k$-equivalence for $k\leq 3$ by finite type invariants (Corollary \ref{corow3}), which imply Theorem \ref{thmain} as seen in Section \ref{sec:ribbon}. 
Several general results are also given on the set of $w_k$-equivalence classes of welded string links, showing in particular that these form a finitely generated, non abelian group. 

We conclude this introduction by mentioning the recent work of Colombari, who gave a complete classification of welded string links up to $w_k$-concordance, for all $k$, in \cite{colombari}. This equivalence relation, generated by $w_k$-equivalence and welded concordance, turns out to completely characterize welded string links (hence, classical string links) having the same Milnor invariants of length $\le k$. This result also shows that all  finite type concordance invariants of welded string links are given by Milnor invariants. 
\medskip

The rest of this paper is organized as follows. 
In Section \ref{sec:welded}, we review the notion of welded knotted objects and the basics of arrow calculus. 
Section \ref{sec:wkk} is devoted to the $w_k$-equivalence; some algebraic properties of welded string links up to $w_k$-equivalence are given in Section \ref{sec:grooop}.  Finite type invariants of welded string links are characterized at low degree in Section \ref{sec:SL3}. 
The topological counterparts of our results, including the proof of Theorem \ref{thmain}, are given in the final Section \ref{sec:ribbon}. 
\section{Welded objects and arrow calculus} \label{preliminaries}

\subsection{Welded knotted objects} \label{sec:welded}

Recall that a virtual diagram is a planar immersion of some $1$-dimensional manifold; the singular set is a finite collection of transverse double points endowed with a decoration, either as a classical or as a virtual crossing. In figures, classical crossings are represented as in usual knot diagrams, while virtual crossings are simply drawn as double points (we do not follow the customary convention using circled double points). 

\begin{definition}
A \emph{welded knotted object} is the equivalence class of a virtual diagram modulo the generalized Reidemeister moves and the OC move. 
Here the generalized Reidemeister moves consist of the three usual Reidemeister moves (involving classical crossings), 
and the Detour move shown in Figure \ref{fig:mixedOC}. The OC move is shown on the right-hand side of the same figure. 
\end{definition}
\begin{figure}[!h]
\begin{center}
 \includegraphics[scale=0.9]{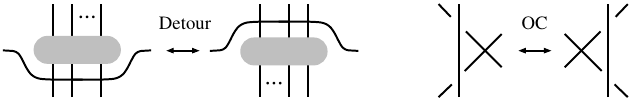} 
  \caption{The Detour and OC moves  
  {\footnotesize (In the Detour move, the grey part indicates any subdiagram, with classical and/or virtual crossings)}
  }\label{fig:mixedOC}
\end{center}
\end{figure}

Recall that usual (string) links inject into welded (string) links, in the sense that two diagrams without virtual crossings, that are related by a sequence of generalized Reidemeister and OC moves, represent isotopic objects, see \cite[Thm~1.B]{Goussarov-Polyak-Viro}. 
Welded objects are also intimately related to ribbon knotted surfaces in $4$-space, via Satoh's Tube map  \cite{Satoh2000}, as further developed in Section \ref{sec:ribbon}.

\begin{remark}
Virtual diagrams modulo generalized Reidemeister move give rise to virtual knotted objects, that were first introduced in the early nineties 
by L.~Kauffman in \cite{Kauffman1999}, and M.~Goussarov, M.~Polyak and O.~Viro in \cite{Goussarov-Polyak-Viro}. 
\end{remark}

This paper will mainly deal with the following class 
\begin{definition}
An $n$-component  \emph{welded string link} is the welded class of $n$ properly immersed copies of the unit interval into $[0,1] \times [0,1]$, endowed with $n$ fixed points on $[0,1] \times \{\varepsilon\}$ ($\varepsilon=0,1$), such that the $i$th copy of the interval runs from the $i$th fixed point in $[0,1] \times \{0\}$ to the $i$th fixed point in $[0,1] \times \{1\}$. A $1$-component welded string link is also called a \emph{welded long knot}. 
\end{definition}

We denote by $wSL(n)$ the set of welded string links. The stacking product endows $wSL(n)$ with a monoid structure, whose unit $\mathbf{1}$ is given by the trivial diagram of $n$ intervals, with no crossing. 

\subsection{Arrow calculus} \label{sec:calculflÃšches}

We now review the diagrammatic calculus for welded objects developed in \cite{Meilhan-Yasuhara}, called arrow calculus. This is a welded analogue of Habiro's clasper calculus for usual knotted objects \cite{Habiro} and, as such, it is intimately related to finite type invariants, see Section \ref{sec:typefini}. 
Let $L$ be some welded knotted object. 
\begin{definition}
A \emph{$w$-tree} for  $L$ is a connected unitrivalent tree $T$, immersed into the plane so that
\begin{itemize}
	\item[$-$] trivalent vertices are endowed with a cyclic order, are pairwise disjoint and disjoint from $L$; 
	\item[$-$] univalent vertices are pairwise disjoint and lie in  $L \setminus \{\textrm{crossings of $L$}\}$; 
	\item[$-$] edges are oriented, so that each trivalent vertex involves exactly one outgoing edge; 
	\item[$-$] edges may contain virtual (but not classical) crossings, either with $L$ or with $T$ itself;
	\item[$-$] edges may contain decorations $\bullet$, called \emph{twists}, 
	                which are subject to the involutive rule that two consecutive twists do cancel. 
\end{itemize}
Moreover, for a union of $w$Ðtrees for $L$, we assume that vertices are pairwise disjoint, and that
crossings among edges are all virtual.
\end{definition}

\noindent We shall call \emph{tails} and \emph{head} the endpoints of a $w$-tree, according to the orientation. 

\begin{definition}
The \emph{degree} of a $w$-tree is the number of tails. For $k \ge 1$, we call \emph{$w_k$-tree} a $w$-tree of degree $k$. 
\end{definition}

Now, a $w$-tree is an instruction for modifying $L$, according to a process which we abusively call surgery, defined as follows. 

\begin{definition}
Let $A$ be a $w_1$-tree for the diagram $L$. The \emph{surgery on $L$ along $A$} yields a new welded diagram $L_A$ according to the local rule: 
$$ \textrm{\includegraphics{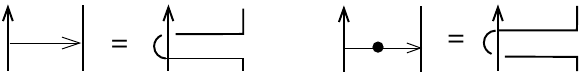}. } $$
If $A$ crosses (virtually) either $L$ or some other $w$-tree, then the strands of $L_A$ likewise cross the same object virtually.

In general, if $T$ is a $w_k$-tree for $L$, then surgery along $T$ is defined as surgery along the union of $w_1$-trees $E(T)$, called the \emph{expansion} of $T$ and defined recursively by the rule:  
$$ \textrm{\includegraphics[scale=0.9]{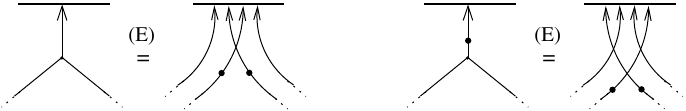}.} $$
\end{definition}
\begin{remark}
In the above figure, the dotted parts represent parallel subtrees, which are parallel copies of the non-depicted part ot the initial $w$-tree, that always cross each other virtually -- see \cite[Conv.~5.1]{Meilhan-Yasuhara} for a detailed explanation. 
\end{remark}

A key point is that any welded object $L$ can be represented in this way as a union of some diagram \emph{with no classical crossing} and some $w$-trees, called an arrow presentation for $L$. 
Since we shall be concerned with welded string links in this paper, let us define this notion more formally in this particular context.

\begin{definition}
Let $L$ be an $n$-component welded string link.
An \emph{arrow presentation} $(\mathbf{1}, T)$ of $L$ consists of the trivial diagram $\mathbf{1}$,  together with a union of $w$-trees $T$ for  $\mathbf{1}$ such that $L = \mathbf{1}_T$. 
Two arrow presentations are \emph{equivalent} if they represent equivalent welded diagrams. 
\end{definition}

By \cite[Prop.~4.2]{Meilhan-Yasuhara}, any element of $wSL(n)$ admits an arrow presentation; moreover, a complete set of relations is known, that relates any two arrow presentations of a given diagram: 
\begin{thm} \cite[Thm.~5.21]{Meilhan-Yasuhara} \label{Mouvements de flÃšches}
Two arrow presentations are equivalent if and only if they are related by a sequence of the following moves: 
\begin{enumerate}
	\item[(1)] Any generalized Reidemeister move involving $w$-trees and/or the diagram, and the following local moves:
	\begin{center}
		\includegraphics[scale=0.83]{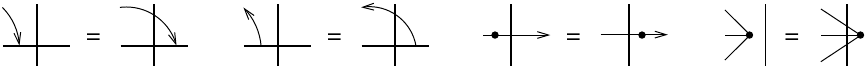} 
		\end{center}
	\item[(2)] Head and Tail reversal:\\[-0.5cm]
		\begin{center}
		\includegraphics[scale=0.9]{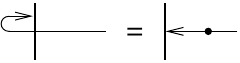} \qquad \qquad
		\includegraphics[scale=0.9]{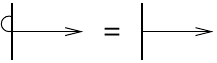} 
		\end{center}
	\item[(3)] Tails exchange (tails may or may not belong to the same $w$-tree):\\[-0.3cm]
		\begin{center}
		\includegraphics[scale=1]{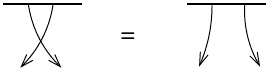} 
		\end{center}
	\item[(4)] Isolated arrow:\\[-0.5cm]
		\begin{center}
		 \includegraphics[scale=0.8]{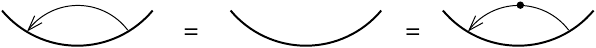} 
		\end{center}
		\item[(5)] Inversion:\\[-0.5cm]
		\begin{center}
		\includegraphics[scale=0.9]{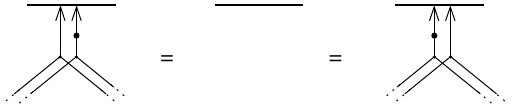}
		\end{center}
	\item[(6)] Slide:\\[-0.5cm]
		\begin{center}
		\includegraphics[scale=0.9]{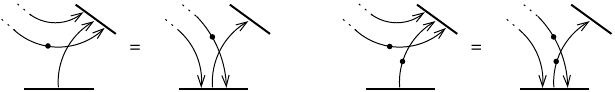} 
		\end{center}

\end{enumerate}
\end{thm}

Moreover, a collection of further operations on arrow presentations can be derived from these moves, as summarized below. 
\begin{proposition} \cite[Lemmas 5.14 to 5.18]{Meilhan-Yasuhara} \label{Ãchanges}
The following local moves give equivalent arrow presentations. 
\begin{enumerate}
	\item[(7)] Heads exchange:\\[-0.5cm]
		\begin{center}
		\includegraphics[scale=0.85]{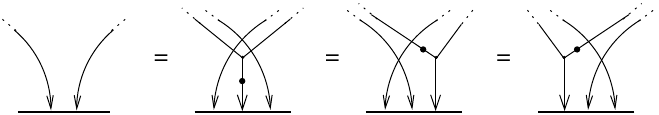} 
		\end{center}
	\item[(8)] Head/Tail exchange:\\[-0.5cm]
		\begin{center}
		\includegraphics[scale=0.85]{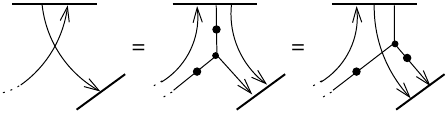} 
		\end{center}
	\item [(9)] AS (Antisymmetry):\\[-0.5cm]
		\begin{center}
		\includegraphics[scale=0.85]{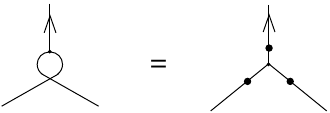}
		\end{center}
	\item[(10)] Fork:\\[-0.5cm]
		\begin{center}
		 \includegraphics[scale=0.7]{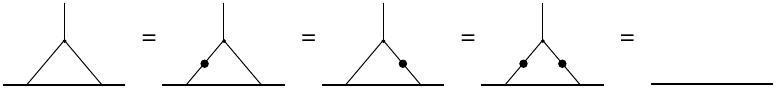}
		 \end{center}
\end{enumerate}
\end{proposition}

\begin{convention}
In what follows, we will blur the distinction between arrow presentations and the welded diagrams obtained by surgery.  
Moreover, we shall use the moves of the previous two results by only referring to their numbering (1)-(10). 
\end{convention}

\subsection{Invariants of welded string links}\label{sec:inv}

Recall that given a welded string link $L$, there is an associated \emph{welded group} $G(L)$, which is abstractly generated by all arcs and with a conjugating relation associated with each classical crossing, see e.g. \cite[\S~6.1]{Meilhan-Yasuhara}. 
We stress that, if $L$ is a classical string link, then $G(L)$ is the fundamental group of the complement. 

\subsubsection{Closure invariants}

We now introduce a family of invariants of welded string links, defined by evaluating the normalized Alexander polynomial on some welded long knot built via a closure process. 

Let $K$ be a  welded long knot.
The \emph{normalized Alexander polynomial} $\tilde{\Delta}_K(t)\in \mathbb{Z}[t^{\pm 1}]$ of $K$ 
was first defined in  \cite{Habiro-Kanenobu-Shima}; see \cite[\S~6.2]{Meilhan-Yasuhara} for a review. 
\begin{definition}
The \emph{$k$th normalized coefficient} of the Alexander polynomial is the coefficient $\alpha_k(K)$ in the power series expansion of  $\tilde{\Delta}_K(t)$ at $t=1$: 
$$\tilde{\Delta}_K(t) = 1 + \sum_{k=2}^{\infty} \alpha_k(K) (1-t)^k.$$
\end{definition}

We now proceed with defining the general closure process underlying closure invariants.
\begin{definition}
Let $n \in \mathbb{N}$. 
A \emph{list} of length $k$ ($k < n$) is a sequence of pairwise distinct, possibly overlined integers in $\{1,\cdots,n\}$.
\end{definition}
A list is an instruction for closing an $n$-component welded string link into a welded long knot. 
\begin{definition}
Let $L$ be an $n$-component welded string link, and let $R$ be a list of length $< n$.
Then $Cl_R(L)$ is the welded long knot obtained as follows:  
\begin{itemize}
	\item delete a neighborhood $L\cap ([0,1]\times [0,\varepsilon[\cup [0,1]\times ]1-\varepsilon,1])$ of the boundary of $L$, and fix the points $p_i:=\{\frac{1}{2}\}\times \{i\}$ ($i=0,1$) on the boundary of $[0,1]\times [0,1]$;  
	\item delete all components whose index does not appear in $R$; 
	\item reverse the orientation of all components whose index is overlined in $R$; 
	\item build a welded long knot, starting at $p_0$ and ending at $p_1$, by connecting these $k$ oriented strands endpoints following the order of the list $R$ and the orientation of each strand, with arbitrary arcs that cross virtually the rest of the diagram. 
\end{itemize}
\end{definition}
See Figure \ref{fig:exclose} for a couple examples. 
Observe that this process is well-defined thanks to the Detour move. 
Note also that the process extends naturally to arrow presentations, by closing the trivial diagram $\mathbf{1}$ as instructed by the list $R$. 
\begin{ex}\label{exclose}
Consider the arrow presentation for $L\in wSL(2)$ shown in the middle of Figure \ref{fig:exclose}.  
\begin{figure}[!h]
\begin{center}
 \includegraphics[scale=1]{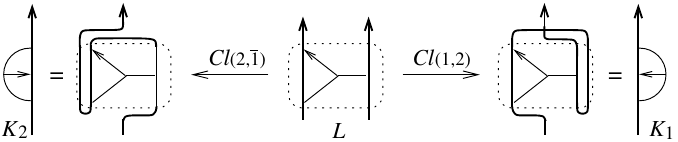} 
  \caption{The closures $Cl_{(2,\overline{1})}(L)$ (left) and $Cl_{(1,2)}(L)$ (right) of the welded string link $L$.
  }\label{fig:exclose}
\end{center}
\end{figure}
On the one hand, the closure $Cl_{(1, 2)}(L)$ is the welded long knot $K_1$ represented on the right-hand side of the figure. On the other hand, as shown in the same figure, the closure $Cl_{(2, \overline{1})}(L)$  gives the welded long knot $K_2$. 
Furthermore, we have that $Cl_{(1, \overline{2})}(L)=K_1$ and $Cl_{(\overline{2}, \overline{1})}(L)=K_2$.
Finally, by the Fork move (10), $Cl_{R}(L)$ is the trivial long knot for 
$R=(\overline{1},2), (\overline{1},\overline{2}), (2,1), (\overline{2},1)$.
\end{ex}

We can now define closure invariants of welded string links. 
\begin{definition}\label{def:closure}
Let $R$ be a list, and let $k\ge 2$ be some integer. 
The \emph{closure invariant} $\mathcal{I}_{R ; k}$ is the welded string link invariant defined by  $\mathcal{I}_{R ; k}(L) = \alpha_k(Cl_R(L))$.
\end{definition}
In particular, the closure invariant $\mathcal{I}_{(i);k}$ simply computes the normalized Alexander coefficient $\alpha_k$ of the $i$th component of a welded string link. 

It is straightforwardly checked that closure invariants indeed are invariants of welded string links. 
This family of invariants should be compared to the closure invariants of classical string links of \cite{MY3}, and further developed in \cite[\S~5.1]{MY_GT}; note however that in these latter works, the closure operations introduce classical crossings.

\subsubsection{Welded linking numbers and Milnor invariants}

Given an $n$-component welded string link $L$ and two distinct indices $i,j\in\{1,\cdots,n\}$, the \emph{welded linking number} $\mu(ij)$ is given by
$$  \mu_L(ij) = \sum_{c\in C_{i,j}} \textrm{sign}(c), $$
where the sum runs over the set $C_{i,j}$ of classical crossings where component $i$ passes \emph{over} component $j$, and where the sign of the crossing is given by the usual rule: 
\begin{center}
\begin{tikzpicture}
	\draw[<-] (0,1) -- (0.4,0.6);
	\draw[->] (0,0) -- (1,1);
	\draw (0,0) node [left] {$i$};
	\draw (0.6,0.4) -- (1,0);
	\draw (1,0) node [right] {$j$};
	\draw (0.5,0) node [below] {$+1$};
	
	\draw[<-] (3,1) -- (4,0);
	\draw (4,0) node [right] {$i$};
	\draw (3,0) -- (3.4,0.4);
	\draw (3,0) node [left] {$j$};
	\draw[->] (3.6,0.6) -- (4,1);
	\draw (3.5,0) node [below] {$-1$};
\end{tikzpicture}
\end{center}
It is quite straightforward to verify that this indeed defines a welded invariant. If $L$ is a classical string link, then clearly we have that  $\mu(ij)=\mu(ji)$ is the usual linking number. 

These invariants were first introduced in \cite{Goussarov-Polyak-Viro}, under the name of virtual linking numbers. 
Just like usual linking numbers were widely generalized into Milnor invariants in \cite{Milnor}, 
there is a welded extension of Milnor invariants $\mu(I)$ for any sequence of indices $I$, which generalizes the welded linking numbers. This extension was  first given in  \cite[Sec.~6]{Audoux-Bellingeri-Meilhan-Wagner2018} using a topological approach and the Tube map (see Section \ref{sec:ribbon}), and a purely diagrammatic version was later provided in \cite{Miyazawa-Wada-Yasuhara}. 

\subsubsection{Finite type invariants}\label{sec:typefini}

We now recall the definition of finite type invariants of welded objects, and observe that the above invariants all fall into this category. 

Recall that a \emph{virtualization move} on a welded diagram is the replacement of a classical crossing by a virtual one. 
Given a welded diagram $L$, and a subset $S$ of the set of classical crossings of $L$, we denote by $L_S$ the welded diagram obtained by applying the virtualization move to all crossings in $S$. 

\begin{definition}\label{def:fti}
Let $\nu$ be an invariant of welded string links, taking values in some abelian group. 
Then $\nu$ is a \emph{finite type invariant of degree $\le k$} if, for any $L\in wSL(n)$ and any set $S$ of $k+1$ classical crossings in $L$, we have 
  $$\sum_{S' \subset S} (-1)^{\vert S' \vert} \nu(L_{S'}) = 0.$$
This is a finite type invariant of degree $k$ if, moreover, it is not of degree $\le k-1$.
\end{definition}

This definition was first given in  \cite[Sec.~2.3]{Goussarov-Polyak-Viro} in the context of virtual knots and links. 
Actually, in that same paper, the authors further identified the first nontrivial invariants of the theory: 
\begin{lemma}\cite{Goussarov-Polyak-Viro} \label{Milnortypefini}
For all $i, j \in \{1,\cdots,n\}$, the welded linking number $\mu(ij)$ is a degree $1$ finite type invariant of welded string links.
\end{lemma}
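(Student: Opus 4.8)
The goal is to show that the welded linking number $\mu(ij)$ is a finite type invariant of degree exactly $1$, which breaks into two parts: it is of degree $\le 1$, and it is not of degree $\le 0$.

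For the degree $\le 1$ part, I must verify that for any welded string link $L$ and any set $S = \{c_1, c_2\}$ of two classical crossings, the alternating sum $\sum_{S' \subset S} (-1)^{|S'|} \mu(L_{S'})(ij)$ vanishes. Expanding, this is
$$\mu_L(ij) - \mu_{L_{\{c_1\}}}(ij) - \mu_{L_{\{c_2\}}}(ij) + \mu_{L_{\{c_1,c_2\}}}(ij).$$
The key observation is that $\mu(ij)$ is defined as a signed count over the set $C_{i,j}$ of classical crossings where component $i$ passes over component $j$. Virtualizing a crossing $c$ simply deletes $c$ from the pool of classical crossings, so it removes the contribution of $c$ from the sum if $c \in C_{i,j}$, and leaves all other contributions untouched. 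I would split into cases according to how many of $c_1, c_2$ belong to $C_{i,j}$: each crossing contributes a fixed amount $\delta_1, \delta_2$ (equal to its sign if it lies in $C_{i,j}$, and $0$ otherwise), and since these contributions are independent and additive, $\mu_{L_{S'}}(ij) = \mu_L(ij) - \sum_{c \in S'} \delta_c$. Substituting into the alternating sum makes the constant terms and the linear terms cancel in pairs, giving $0$. This is the routine heart of the argument.

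For the statement that $\mu(ij)$ is not of degree $\le 0$, it suffices to exhibit two welded string links differing by a single virtualization that have distinct $\mu(ij)$; equivalently, a single $L$ with a crossing $c \in C_{i,j}$ so that $\mu_L(ij) \neq \mu_{L_{\{c\}}}(ij)$. I would simply take the two-component welded string link with a single positive classical crossing where strand $i$ passes over strand $j$ (a single arrow from $i$ to $j$, i.e. a $w_1$-tree), for which $\mu(ij) = 1$, while virtualizing that crossing yields $\mu(ij) = 0$. Since a degree $\le 0$ invariant would be unchanged by any single virtualization, this shows $\mu(ij)$ has degree exactly $1$.

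The only genuine subtlety, and the step I would be most careful about, is checking that the virtualization move interacts with $\mu(ij)$ exactly as claimed on the \emph{welded} invariant rather than on a fixed diagram: one must confirm that $\mu(ij)$ is well-defined on welded equivalence classes (already asserted in the excerpt) and that $L_{S'}$ is itself a well-defined welded string link, so that the formula $\mu_{L_{S'}}(ij) = \mu_L(ij) - \sum_{c \in S'}\delta_c$ is an equality of invariants and not merely of diagram-level counts. Once this compatibility is in place, the combinatorics of the alternating sum is immediate and the non-triviality example closes the argument.
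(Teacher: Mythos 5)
Your proof is correct, but there is nothing in the paper to compare it against: the paper states this lemma as a quotation from Goussarov--Polyak--Viro and gives no proof of its own, so your argument stands as a self-contained verification, and a valid one. The heart of your argument is exactly right: since $\mu(ij)$ is defined diagrammatically as a signed count over the set $C_{i,j}$, virtualizing a crossing $c$ removes its contribution $\delta_c$ (its sign if $c \in C_{i,j}$, zero otherwise) and leaves all other contributions unchanged, so $\mu_{L_{S'}}(ij) = \mu_L(ij) - \sum_{c \in S'} \delta_c$ is an affine function of $S'$ and every alternating sum over the subsets of a two-element set vanishes. Your non-degeneracy example is also correct, and your parenthetical identification is accurate: the diagram with a single positive classical crossing of $i$ over $j$ (plus the virtual crossings needed for the strands to return to their endpoints) is precisely the welded string link $Z_{i,j}$ obtained by surgery along a single $w_1$-tree, and virtualizing its unique classical crossing sends $\mu(ij)$ from $1$ to $0$, ruling out degree $\le 0$. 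For comparison, the cited source derives such statements from its general Gauss diagram machinery, in which any invariant computed by a Gauss diagram formula whose terms involve at most $k$ arrows is of finite type degree $\le k$; the welded linking number is the simplest instance, given by the one-arrow formula counting arrows of $i$ over $j$. Your argument is the hands-on specialization of that principle, which costs nothing here since the degree-$1$ case requires none of the general theory. Finally, the well-definedness point you flag at the end is genuine but resolves exactly as you say: $\mu(ij)$ is computed by the same signed count on any representative diagram, and each $L_{S'}$ is a bona fide welded diagram, so your diagram-level identity is an identity of invariants.
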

There are finite type invariants in any degree. Indeed, we have the following. 
\begin{lemma}\cite{Habiro-Kanenobu-Shima}\label{Alextypefini}
For all $k \geq 2$, $\alpha_k$ is a degree $k$ finite type invariant of welded long knots.
\end{lemma}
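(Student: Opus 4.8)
The plan is to prove Lemma \ref{Alextypefini}, asserting that $\alpha_k$ is a finite type invariant of degree exactly $k$ for welded long knots. This splits naturally into two tasks: first, showing that $\alpha_k$ is of degree $\le k$, i.e. that the alternating sum over any $k+1$ virtualizations vanishes; and second, exhibiting an explicit family showing it is not of degree $\le k-1$.

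For the upper bound, I would work through the arrow calculus of Section \ref{sec:calculflèches}. The key observation is that a single virtualization move is, up to $w$-tree manipulation, the same as erasing a $w_1$-tree (or, more precisely, changing a welded diagram by a $w_1$-tree surgery): the alternating sum $\sum_{S'\subset S}(-1)^{|S'|}\nu(L_{S'})$ over $k+1$ crossings can be re-expressed using the expansion rule so that each term differs from the trivial diagram by a controlled number of $w$-trees. I would then invoke the relationship between $w_k$-equivalence and finite type invariants alluded to in the introduction, together with Proposition \ref{alexander et additivité} (the additivity of $\alpha_k$ under the stacking product when lower coefficients vanish). Concretely, the multiplicativity of the normalized Alexander polynomial lets one linearize: expanding $\tilde\Delta$ as a power series in $(1-t)$ and using that each virtualization changes the welded group presentation by a bounded Fox-derivative contribution, one checks that $\alpha_k$ is a polynomial of degree $k$ in the "crossing variables," so the $(k+1)$-fold alternating sum annihilates it.

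For the sharpness — that $\alpha_k$ is not of degree $\le k-1$ — I would produce an explicit welded long knot (or one-parameter family built by inserting $w_k$-trees) on which $\alpha_k$ realizes a nonzero alternating sum over exactly $k$ crossings. The natural candidate is a standard generator of the $w_k$-equivalence filtration: a welded long knot presented by a single $w_k$-tree whose surgery is detected by $\alpha_k$ but is trivial under all $\alpha_j$ with $j<k$. Computing $\tilde\Delta$ on such a model via the Wirtinger/Fox-calculus recipe of Section \ref{sec:inv} should give $\alpha_k \neq 0$, certifying the degree is at least $k$.

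The main obstacle I expect is the upper bound, specifically controlling how the normalized Alexander polynomial behaves under iterated virtualization. A virtualization move does not act simply on the greatest-common-divisor-of-minors definition of $\Delta_K$, so the clean additivity of Proposition \ref{alexander et additivité} is essential but only applies after arranging that all lower coefficients vanish; handling the general alternating sum requires a careful inductive or filtration argument, presumably the one in \cite{Habiro-Kanenobu-Shima}, translating the combinatorics of crossing subsets $S'\subset S$ into a multilinearity statement for the power-series coefficients. Making this translation rigorous, rather than the individual Fox-derivative computations, is where the real work lies.
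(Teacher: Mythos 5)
First, note that the paper contains no proof of Lemma \ref{Alextypefini}: the statement is imported wholesale from \cite{Habiro-Kanenobu-Shima}. So there is no internal argument to compare yours against, and the only question is whether your sketch would stand on its own. It would not: the upper bound (degree $\le k$), which is the entire content of the lemma, is never actually established in your proposal.

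There are two concrete problems. First, the tool you invoke for the upper bound points in the wrong direction. Proposition \ref{thm:wkFTI} says that invariants \emph{already known} to be finite type of degree $<k$ agree on $w_k$-equivalent links; it cannot certify that $\alpha_k$ \emph{is} finite type. What you would need is the converse-type statement that an invariant constant on $w_{k+1}$-equivalence classes vanishes on the $(k+1)$-st term of the virtualization filtration; for welded string links this is essentially Conjecture \ref{conjek}, and even for welded long knots the known proof of the converse (in \cite{Meilhan-Yasuhara}, \cite{HS}) takes the finite-type-ness of $\alpha_k$ from \cite{Habiro-Kanenobu-Shima} as an input, so this route is circular. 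Second, the argument that would actually work --- the one you gesture at with ``each virtualization changes the presentation by a bounded Fox-derivative contribution'' --- is asserted rather than carried out. The mechanism is: keep one Wirtinger generator per arc of the original diagram for all $2^{|S|}$ diagrams simultaneously, so that virtualizing a crossing replaces the single row $(1-t^{-1},\ t^{-1},\ -1)$ of the Alexander matrix by $(0,\ 1,\ -1)$; the difference of these rows is divisible by $(1-t)$ up to a unit, and since for a welded long knot $\Delta$ is computed by a single determinant, multilinearity in the rows makes the $(k+1)$-fold alternating sum of $\Delta$ divisible by $(1-t)^{k+1}$, killing $\alpha_j$ for all $j\le k$. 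But this computation concerns $\Delta$, not $\tilde\Delta$: the normalizing unit $\pm t^{m}$ (forcing value $1$ and vanishing derivative at $t=1$) is multiplicative and depends on the subset $S'$, so it does not commute with alternating sums; this is exactly the obstacle you name in your last paragraph and then defer to \cite{Habiro-Kanenobu-Shima} (``presumably the one in\dots''). Deferring the hard step to the very reference being cited makes the proposal a reading plan, not a proof. By contrast, your sharpness half is fine in outline: realizing a nonzero $k$-fold alternating sum by surgery along a single $w_k$-tree is indeed how the degree is detected (compare the computation $\mathcal{I}_{(i);2}(E_i^{\pm 1})=\pm 1$ used in the proof of Theorem \ref{classiw3nbr}).
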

As an immediate consequence, we have: 
\begin{corollary}\label{cor:fticlose}
For all $k \geq 2$ and all list $R$,  the closure invariant $I_{R, k}$ is a degree $k$ finite type invariant of welded string links.
\end{corollary}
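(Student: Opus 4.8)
The plan is to reduce the statement to Lemma~\ref{Alextypefini} by exploiting the fact that the closure operation $Cl_R$ only ever introduces \emph{virtual} crossings. Consequently, the classical crossings of $Cl_R(L)$ are precisely those classical crossings of $L$ both of whose strands belong to a component whose index occurs in $R$; every other classical crossing either disappears, when it involves a deleted component, or is left untouched by the deletions, reversals and virtual reconnections making up $Cl_R$. The first thing I would establish is therefore a commutation principle: for a classical crossing $c$ that survives in $Cl_R(L)$, virtualizing $c$ commutes with closing, that is $Cl_R(L_{\{c\}}) = (Cl_R(L))_{\{c\}}$, where $c$ also denotes its image in the closure; whereas for a crossing $c$ involving a deleted component one has $Cl_R(L_{\{c\}}) = Cl_R(L)$, since the crossing vanishes entirely once one of its strands is removed. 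Both facts are local: the virtualization move is a purely local replacement of a classical crossing by a virtual one, hence insensitive to the orientation reversals prescribed by the overlined indices of $R$ and, by the Detour move, to the precise placement of the virtual arcs added by the closure. Applying this simultaneously to a subset $S'$ of crossings yields $Cl_R(L_{S'}) = (Cl_R(L))_{S'_{\mathrm{surv}}}$, where $S'_{\mathrm{surv}} \subseteq S'$ is its subset of surviving crossings.

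With this in hand, let $S$ be a set of $k+1$ classical crossings of an arbitrary $L \in wSL(n)$; I want to show that $\sum_{S' \subseteq S}(-1)^{|S'|}\mathcal{I}_{R;k}(L_{S'}) = 0$. I would split into two cases according to whether $S$ contains a crossing $c$ involving a deleted component. If it does, then the commutation principle gives $Cl_R(L_{S'}) = Cl_R(L_{S' \triangle \{c\}})$ for every $S'$, since $(S'\triangle\{c\})_{\mathrm{surv}} = S'_{\mathrm{surv}}$; hence the terms indexed by $S'$ and $S'\triangle\{c\}$ carry equal values of $\alpha_k$ but opposite signs, and the fixed-point-free involution $S' \mapsto S'\triangle\{c\}$ pairs the summands off to $0$. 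If instead all $k+1$ crossings survive the closure, the commutation principle identifies the alternating sum with
$$ \sum_{\tilde{S}' \subseteq \tilde{S}}(-1)^{|\tilde{S}'|}\,\alpha_k\big((Cl_R(L))_{\tilde{S}'}\big), $$
where $\tilde{S}$ is the image of $S$, a set of $k+1$ classical crossings of the welded long knot $Cl_R(L)$. This vanishes by Lemma~\ref{Alextypefini}, as $\alpha_k$ has degree $k < k+1$. In both cases the defining alternating sum is zero, so $\mathcal{I}_{R;k}$ has degree $\le k$.

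I expect the only genuine subtlety to be the careful justification of the commutation principle, namely checking that each of the three operations composing $Cl_R$ (deletion, orientation reversal, virtual reconnection) commutes with the virtualization of a surviving crossing; this is essentially bookkeeping, since virtualization is local and the Detour move removes any dependence on the chosen connecting arcs. Finally, to see that the degree is exactly $k$ and not merely $\le k$ for lists $R$ on which $\mathcal{I}_{R;k}$ is not identically trivial, it suffices to produce one configuration of $k$ surviving crossings whose alternating sum for $\alpha_k$ is nonzero. The case $R=(i)$ is transparent: there $\mathcal{I}_{(i);k}$ is $\alpha_k$ evaluated on the $i$th component, which is of degree exactly $k$ by Lemma~\ref{Alextypefini}, and any such extremal configuration of $k$ crossings on a welded long knot can be realized on the $i$th strand of a welded string link whose remaining components are trivial.
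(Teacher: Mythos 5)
Your proposal is correct and follows essentially the same route as the paper, which states this corollary as an immediate consequence of Lemma \ref{Alextypefini}: the content of that immediacy is precisely your observation that $Cl_R$ commutes with virtualization of surviving crossings and erases crossings on deleted components, so the alternating sum either telescopes to zero or becomes the defining sum for $\alpha_k$ on the welded long knot $Cl_R(L)$. Your additional care about the degree being exactly $k$ (via $R=(i)$ and realizing extremal configurations on one strand) is a detail the paper does not even spell out, but it is consistent with its conventions.
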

\begin{remark}
We note that Lemma \ref{Milnortypefini} generalizes to all Milnor invariants, in the sense that for a sequence $I$ of $k \geq 2$ indices in $\{1,\cdots,n\}$, the welded Milnor invariant $\mu(I)$  is a degree $k-1$ finite type invariant of welded string links.
A complete proof of this fact can be found in the Appendix of \cite{Casejuane}.
\end{remark}

\section{$w_k$-equivalence}\label{sec:wkk}

In this section, we review the family of equivalence relations introduced in \cite{Meilhan-Yasuhara}, called $w_k$-equivalence, and recall how it can be used as a tool for studying finite type invariants. Although the definition can be made in the general context of welded objects, we shall restrict ourselves below to welded string links; in particular, we investigate the algebraic properties of the group of $w_k$-equivalence classes of welded string links.

\subsection{Definition and relation to finite type invariants}

Let $k$ be a positive integer. 
\begin{definition}
Two welded string links $L,L'$ are \emph{$w_k$-equivalent}, denoted by $L\stackrel{k}{\sim} L'$, if there exists a finite sequence $(L_i)_{0 \leq i \leq n}$ of elements of $wSL(n)$ such that  $L_0 = L$, $L_n = L'$ and for each $i$, $L_{i+1}$ is obtained from $L_i$ either by surgery along a $w_l$-tree for some $l \geq k$ or by a generalized Reidemeister or OC move.
\end{definition}
Using the Expansion move (E), one sees that  the $w_k$-equivalence becomes finer as $k$ increases. 
This notion turns out to be closely related to finite type theory. 
\begin{proposition} \cite[Prop.~7.5]{Meilhan-Yasuhara} \label{thm:wkFTI}
For $k \geq 2$, two welded (string) links that are $w_k$-equivalent, share all finite type invariants of degree $< k$. 
\end{proposition}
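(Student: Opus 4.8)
\emph{Proof strategy.} My plan is to reduce the statement to a single $w$-tree surgery and then recognise the resulting difference of $\nu$-values as an alternating sum of the kind that defines finite type invariants. First I would note that generalized Reidemeister and OC moves leave the welded isotopy class unchanged, hence preserve every welded invariant; so by the definition of $\stackrel{k}{\sim}$ it suffices to treat the case $L'=L_T$, where $T$ is a single $w_l$-tree with $l\ge k$. Fix a finite type invariant $\nu$ of degree $d$ with $d<k$. Since $d<k\le l$, it is enough to prove that surgery along a single $w_l$-tree preserves every finite type invariant of degree $<l$. I would also record the elementary self-improvement of the defining relation: if $\nu$ has degree $\le d$, then $\sum_{S'\subseteq S}(-1)^{|S'|}\nu(D_{S'})=0$ for \emph{every} welded diagram $D$ and every set $S$ of classical crossings with $|S|\ge d+1$ (obtained by summing the degree-$(d+1)$ relations over the surplus crossings). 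In particular such a sum vanishes as soon as $|S|\ge l>d$.

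The engine of the argument is the following collapse principle, which I would establish using arrow calculus: each of the $l$ tails of $T$ grabs the diagram, and deactivating the grab at any single tail trivialises the entire surgery, returning $L$. Concretely, a $w_l$-tree acts through the iterated-commutator-type expansion $E(T)$, so switching off one tail makes one factor of the commutator trivial; the Inversion move (5), the Isolated arrow move (4) and the Fork move (10) then delete the inactivated part of the tree and produce $L$. Granting that deactivating a tail is realised by virtualizing a single classical crossing $c_i$ $(1\le i\le l)$ of a suitable diagram $D$ with $D=L_T$, the collapse principle gives $D_{S'}=L$ for every nonempty $S'\subseteq\{c_1,\dots,c_l\}$, while $D_\emptyset=L_T$. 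Using $\sum_{\emptyset\ne S'\subseteq S}(-1)^{|S'|}=-1$ one then computes
\[
\sum_{S'\subseteq S}(-1)^{|S'|}\nu(D_{S'})=\nu(L_T)-\nu(L),\qquad S=\{c_1,\dots,c_l\}.
\]
Since $|S|=l>d$, the left-hand side vanishes by the strengthened degree condition, whence $\nu(L_T)=\nu(L)$, as desired.

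The main obstacle is to make the collapse principle and the tail-to-crossing correspondence precise with respect to the \emph{expansion-based} definition of surgery: surgery along $T$ is by definition surgery along $E(T)$, and each tail of $T$ gives rise to several parallel tails of $E(T)$. Consequently, deactivating one tail is a priori the simultaneous virtualization of a \emph{block} of crossings rather than of a single $c_i$, and virtualizing only part of a block produces an intermediate, generally nontrivial, surgery. Reconciling this with the finite type condition, which toggles crossings one at a time, requires organising the crossings of $L_{E(T)}$ into the $l$ blocks and checking, by induction on $l$ and repeated use of the moves of Theorem \ref{Mouvements de flèches} and Proposition \ref{Échanges}, that the partial-block contributions cancel, so that the full alternating sum still collapses to $\pm\big(\nu(L_T)-\nu(L)\big)$. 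This bookkeeping, the welded analogue of Habiro's zip construction for claspers, is where I expect the real work to lie; once it is in place, the finite type computation of the second paragraph is immediate.
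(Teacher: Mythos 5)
This proposal has a genuine gap, and you name it yourself: the entire computation in your second paragraph is carried out ``granting that deactivating a tail is realised by virtualizing a single classical crossing $c_i$,'' and your third paragraph concedes that this assumption is false --- in the expansion $E(T)$ each tail of $T$ becomes a block of parallel tails, hence a block of classical crossings --- and then defers the reconciliation to bookkeeping ``where I expect the real work to lie.'' That reconciliation is not a peripheral verification; it is the entire content of the proposition, since for $l\ge 2$ the single-crossing situation you do treat never occurs. So what you have written is a strategy outline rather than a proof. Moreover, the mechanism you propose for the missing step points in the wrong direction: the partial-block terms (virtualize some but not all crossings of a block) are genuinely distinct welded diagrams, and their contributions are not cancelled by arrow moves or by induction on $l$. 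Already for a $w_2$-tree, whose expansion consists of four arrows $a,b,\bar a,\bar b$ read along the head, the eight partial-deletion terms cancel only after several applications of the degree-$\le d$ finite type relations; no diagrammatic argument, and no welded analogue of the zip construction, produces this cancellation.

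What closes the gap --- and what the proof cited from Meilhan--Yasuhara does, after the same reduction to a single tree, the same tail-blocks, and the same collapse property (which does follow from the Inversion move (5) and the Isolated arrow move (4), as you indicate) --- is a purely formal telescoping, using exactly the strengthened vanishing you recorded in your first paragraph. Let $D=L_{E(T)}$, let $B_1,\dots,B_l$ be the blocks of classical crossings of $D$ corresponding to the tails of $T$, write $B_i=\{c_{i,1},\dots,c_{i,n_i}\}$, and for a set $X$ of crossings let $D_X$ denote the diagram with the crossings of $X$ virtualized. Telescoping each block difference over its crossings, one obtains
\[
\sum_{I\subseteq\{1,\dots,l\}}(-1)^{|I|}\,\nu\bigl(D_{\cup_{i\in I}B_i}\bigr)
\;=\;\sum_{j_1=1}^{n_1}\cdots\sum_{j_l=1}^{n_l}\;\;\sum_{S'\subseteq\{c_{1,j_1},\dots,c_{l,j_l}\}}(-1)^{|S'|}\,\nu\bigl(D_{S'\cup X(j_1,\dots,j_l)}\bigr),
\]
where $X(j_1,\dots,j_l)=\cup_{i}\{c_{i,1},\dots,c_{i,j_i-1}\}$. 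Each inner sum is the defining alternating sum for $\nu$, taken on the diagram $D_{X(j_1,\dots,j_l)}$ and on the set of $l$ distinct classical crossings $\{c_{1,j_1},\dots,c_{l,j_l}\}$; since $l\ge k>d$, it vanishes by the strengthened finite type condition. By your collapse principle, the left-hand side equals $\nu(L_T)-\nu(L)$, whence $\nu(L_T)=\nu(L)$. In short, you had every ingredient on the table --- the reduction to one tree, the blocks, the collapse property, and the strengthened vanishing --- but the proof as submitted does not assemble them, and the assembly, which is combinatorial rather than diagrammatic, is exactly where the proposition lives.
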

Furthermore, it is proved in \cite{Meilhan-Yasuhara} that the converse holds for welded knots and welded long knots. 
For welded string links, we are naturally led to the following, which was first discussed in \cite[\S~10.3]{Meilhan-Yasuhara}.  
\begin{conjecture}\label{conjek}
Two welded string links are $w_k$-equivalents if and only if they cannot be distinguished by finite type invariants of degree $<k$. 
\end{conjecture}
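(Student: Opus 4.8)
The forward implication is already Proposition \ref{thm:wkFTI}, so the entire content is the converse: if $L,L'\in wSL(n)$ cannot be distinguished by any finite type invariant of degree $<k$, then $L\stackrel{k}{\sim}L'$. I would prove this by induction on $k$. The base case $k=1$ is immediate: the only invariants of degree $<1$ are the constant ones (a degree $\le 0$ invariant is unchanged under every single virtualization, hence factors through the all-virtual, i.e. trivial, diagram), so any two welded string links share them; and since every element of $wSL(n)$ admits an arrow presentation on the trivial diagram, every element is $w_1$-equivalent to $\mathbf{1}$, so any two are $w_1$-equivalent.

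For the inductive step I assume the statement in all degrees $<k$. Given $L,L'$ agreeing on all invariants of degree $<k$, they in particular agree in degree $<k-1$, so by induction $L\stackrel{k-1}{\sim}L'$. Working in the group $wSL(n)/\!\stackrel{k}{\sim}$ (whose group structure is established in Section \ref{sec:grooop}), consider the difference class $x:=[L'][L]^{-1}$; by the above it lies in the graded piece $\mathcal{A}_{k-1}(n):=\ker\big(wSL(n)/\!\stackrel{k}{\sim}\ \longrightarrow\ wSL(n)/\!\stackrel{k-1}{\sim}\big)$, consisting of welded string links that are $w_{k-1}$-trivial, taken modulo $w_k$-equivalence. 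Using the reversal, exchange, slide and antisymmetry moves of Theorem \ref{Mouvements de flèches} and Proposition \ref{Échanges}, I would show that $\mathcal{A}_{k-1}(n)$ is an \emph{abelian} group (commutators drop one filtration level), generated by the surgeries along single $w_{k-1}$-trees and subject to the antisymmetry and IHX-type relations that arrow calculus imposes among trees of fixed degree.

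With this structural description in hand, the conjecture reduces to a single detection statement: the finite type invariants of degree $k-1$ jointly separate the elements of $\mathcal{A}_{k-1}(n)$. Indeed, a degree-$(k-1)$ invariant $\nu$, restricted to $w_{k-1}$-trivial links, induces a homomorphism $\bar\nu$ on $\mathcal{A}_{k-1}(n)$ by the additivity that underlies Proposition \ref{alexander et additivité}, and $\bar\nu(x)=\nu(L')-\nu(L)$. If $L,L'$ agree on all such invariants then $x$ lies in the common kernel of all the $\bar\nu$, and the detection statement forces $x=0$, that is $L\stackrel{k}{\sim}L'$. The plan is therefore to exhibit, for each generating $w_{k-1}$-tree, a combination of closure and Milnor invariants taking a prescribed value on it and vanishing on the others, which identifies $\mathcal{A}_{k-1}(n)$ with its image under these invariants.

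This last detection step is exactly where I expect the genuine difficulty to lie, and it is the reason the statement is only conjectural. For $k=2$ and $k=3$ it can be carried out by hand, since the groups $\mathcal{A}_{k-1}(n)$ are small enough that one computes the values of the linking numbers $\mu(ij)$ and of the finitely many closure invariants $\mathcal{I}_{R;2}$ appearing in the main Theorem on an explicit generating set, and checks directly that the resulting matrix is invertible; this is the content of Corollaries \ref{corow2} and \ref{corow3}. For general $k$ two obstructions appear. First, one must produce a relation-reduced basis of $\mathcal{A}_{k-1}(n)$, which requires full control of the arrow-calculus relations among degree-$(k-1)$ trees, a delicate combinatorial problem. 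Second, and more seriously, it is unclear that closure and Milnor invariants span the space of degree-$(k-1)$ invariants at all: already the $2$-component degree $3$ analysis of Appendix A is forced to invoke length $4$ Milnor invariants and rests on a further conjectural relation, see Remark \ref{rmqpourplustard}. Constructing, in every degree, a family of finite type invariants rich enough to realize the dual of $\mathcal{A}_{k-1}(n)$ is the essential open problem behind the conjecture.
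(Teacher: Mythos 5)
The statement you set out to prove is Conjecture \ref{conjek}; the paper offers no proof of it, only a verification in low degree (Corollaries \ref{corow2} and \ref{corow3}), so there is no ``paper proof'' to compare against. Your text is accordingly not a proof but a reduction of the conjecture to an open detection problem, as you yourself say. The parts you do argue are sound and match how the paper proceeds in degrees $2$ and $3$: the forward implication is Proposition \ref{thm:wkFTI}; the base case $k=1$ is correct (degree-$0$ invariants are constants, and $wSL(n)_1$ is trivial since every welded string link admits an arrow presentation); the inductive reduction to the graded piece $\mathcal{A}_{k-1}(n)$ is legitimate, and that piece is indeed an abelian group generated by single $w_{k-1}$-tree surgeries, since exchanging endpoints of two degree-$(k-1)$ trees costs trees of degree $\ge 2(k-1)\ge k$ by Corollary \ref{lem:exchange}, cf.\ Lemma \ref{cor:separate+}. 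But the final step---exhibiting, in every degree, a family of finite type invariants that separates the elements of $\mathcal{A}_{k-1}(n)$---is precisely the content of the conjecture, and you leave it as an acknowledged hypothesis. So, judged as a proof, the proposal has a genuine gap: it establishes nothing beyond what Proposition \ref{thm:wkFTI} and the low-degree classifications already give.

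Two secondary points. First, your assertion that every degree-$(k-1)$ invariant induces a homomorphism $\bar\nu$ on $\mathcal{A}_{k-1}(n)$ is not covered by Proposition \ref{alexander et additivité}, which is a multiplicativity statement for the normalized Alexander polynomial of long knots; the paper only proves such additivity for closure invariants (Proposition \ref{clôture et additivité}) and quotes it for Milnor invariants. For an arbitrary finite type invariant this homomorphism property is true but requires its own argument, in the style of Habiro's theory, which you would need to supply. Second, your concern that closure and Milnor invariants might not span all degree-$(k-1)$ invariants is exactly what Remark \ref{rmqpourplustard} records: already for two components in degree $3$, the (conditional) classification needs length-$4$ Milnor invariants and rests on the further Conjecture \ref{Conjecture}. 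In short, your strategy is the natural one and is consistent with the paper's low-degree arguments, but the statement remains open and your text does not close it.
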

\noindent This can be seen as a welded analogue of the Goussarov-Habiro conjecture, see \cite{Habiro}. 
As a matter of fact, Corollary \ref{corow3} validates this conjecture at low degree.

\subsection{Refined arrow calculus}

When working up to $w_k$-equivalence, the arrow calculus can be further refined: the point is that working up to $w_k$-equivalence allows for operations 'up to higher order terms'.  Indeed, in addition to the ten moves of Theorem \ref{Mouvements de flÃšches} and Proposition \ref{Ãchanges}, we have a number of extra operations at our disposal for manipulating arrow presentations.
Some of these operations are summarized in Lemma \ref{lem:techno} below, whose proof can be found in \cite[Sec.~7.4]{Meilhan-Yasuhara}. 
They are given in terms that are slightly stronger than $w_k$-equivalence, as follows.
Given two arrow presentations $(\mathbf{1},T)$ and $(\mathbf{1},T')$ and some integer $k \ge 1$, we denote by\\[-0.3cm]
$$ (\mathbf{1},T) \lr{k} (\mathbf{1},T')$$
the fact that  $(\mathbf{1},T) = (\mathbf{1},T'\cup T'')$ for some union $T''$ of w-trees of degree $\ge k$.
Note that $(\mathbf{1},T) \lr{k} (\mathbf{1},T')$ implies that $\mathbf{1}_T \stackrel{k}{\sim} \mathbf{1}_{T'}$. 

\begin{lemma}\label{lem:techno} 
Let $k, k'$ be integers.  
\begin{enumerate}
	\item[(11)] Twist: If $k\ge 2$, for any $w_k$-tree containing a twist we have\\[-0.5cm]
		\begin{center}
		\includegraphics[scale=0.9]{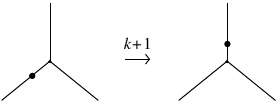} 
		\end{center}
	\item[(12)] Generalized Head/Tail exchange: We have \\[-0.5cm]
		\begin{center}
		\includegraphics[scale=0.85]{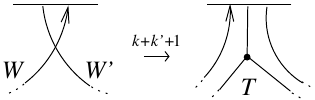} 
		\end{center}
        where $W$ and $W'$ are $w$-trees of degree $k$ and $k'$, respectively, so that $T$ is a $w_{k+k'}$-tree.  
        \item[(13)] IHX: If $k\ge 3$ we have\\[-0.5cm]
                \begin{center}
		\includegraphics[scale=0.85]{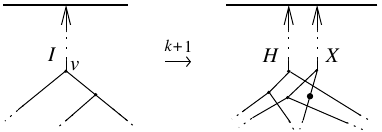} 
		\end{center}       
	where $I$, $H$ and $X$ are three $w_k$-trees as shown.  
\end{enumerate}
\end{lemma}
\noindent As with arrow moves (1)-(10), we shall use the relations of Lemma \ref{lem:techno} by only referring to their numbering. 

\begin{remark}
 Combining the Twist relation (11) with the with the reversal move (2) and the AS move (9), 
 we have that the two welded long knots $K_1$ and $K_2$ of Example \ref{exclose}, are $w_3$-equivalent: 
 \begin{center}
\begin{tikzpicture}
	\draw[<-] [very thick] (0,2) -- (0,0);
	\draw (0,1.5) arc(90:270:0.5);
	\draw[<-] (0,1) -- (-0.5,1);
	
	\draw (0.5,1) node {$=$};
	\draw[<-] [very thick] (1.5,2) -- (1.5,0);
	\draw (1.5,1.5) arc(90:270:0.5);
	\draw (1,1) -- (1.5,1);
	\draw (1.25,0.975) node {$\bullet$};
	\draw[->] (1.5,1) arc(270:450:0.125);
	
	\draw (2,1) node {$=$};
	\draw[<-] [very thick] (2.5,2) -- (2.5,0);
	\draw[<-] (2.5,1) -- (3,1);
	\draw (2.75,0.975) node {$\bullet$};
	\draw (3,1) arc(180:0:0.125);
	\draw (3.25,1) arc(0:-90:0.75);
	\draw (3,1) arc(180:360:0.125);
	\draw (3.25,1) arc(0:90:0.75);
	
	\draw (3.5,1) node {$=$};
	\draw[<-] [very thick] (4,2) -- (4,0);
	\draw (4,1.5) arc(90:-90:0.5);
	\draw (4.25,1.4) node {$\bullet$};
	\draw (4.25,0.6) node {$\bullet$};
	\draw[<-] (4,1) -- (4.5,1);
	
	\draw (5,1) node {$\stackrel{3}{\sim}$};
	\draw[<-] [very thick] (5.5,2) -- (5.5,0);
	\draw (5.5,1.5) arc(90:-90:0.5);
	\draw[<-] (5.5,1) -- (6,1);
\end{tikzpicture}
\end{center}
\end{remark}

Combining relation (12) with the previous exchange moves (3) and (7), we have the following.
\begin{corollary} \cite[Cor.~7.13]{Meilhan-Yasuhara} \label{lem:exchange}
Let $T$ and $T'$ be $w$-trees of degree $k$ and $k'$, respectively. One can freely exchange the relative position of two adjacent univalent vertices (head or tail) of $T$ and $T'$ at the cost of extra $w$-trees, all of degree $\ge k+k'$. 
\end{corollary}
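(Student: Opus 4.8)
The plan is to derive Corollary~\ref{lem:exchange} from the Generalized Head/Tail exchange relation~(12) together with the two exchange moves~(3) and~(7) already at our disposal. The statement asserts that \emph{any} pair of adjacent univalent vertices of $T$ and $T'$ may be swapped, where each vertex is either a head or a tail; there are thus four cases to treat according to the types of the two vertices involved: tail/tail, head/head, head/tail, and tail/head. I would organize the proof as a short case analysis, invoking in each case the appropriate already-established move and checking that the correction terms produced have the claimed degree.

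First I would dispose of the two 'homogeneous' cases. If both adjacent vertices are tails, then the Tails exchange move~(3) lets us swap them with \emph{no} correction term at all, which is certainly an equivalence up to $w$-trees of degree $\ge k+k'$; the same holds for two heads via the Heads exchange move~(7). These cases are immediate and require only a remark that exact equivalence is a fortiori equivalence at the cost of higher-degree trees. The substantive content is in the two 'mixed' cases, where one vertex is a head and the other a tail. Here relation~(12) applies directly: since $T$ and $T'$ have degrees $k$ and $k'$, the Generalized Head/Tail exchange produces a correction $w$-tree which, as recorded in Lemma~\ref{lem:techno}(12), has degree exactly $k+k'$, hence $\ge k+k'$ as required. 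The remaining mixed case (tail of $T$ adjacent to head of $T'$, say, versus head of $T$ adjacent to tail of $T'$) is symmetric and follows by the same relation after possibly relabeling $T$ and $T'$, or by combining~(12) with a reversal move~(2) to interchange the roles of head and tail.

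The one genuinely careful point, and the step I expect to need the most attention, is the bookkeeping of degrees and the verification that relation~(12) does apply in the generality claimed. Relation~(12) as stated in Lemma~\ref{lem:techno} involves two $w$-trees $W$ and $W'$ whose union is a single $w_{k+k'}$-tree; I must make sure that the local picture of two \emph{adjacent} univalent vertices of distinct trees $T$ and $T'$ matches the hypotheses of~(12) after isolating the relevant sub-configuration, using moves~(1) if necessary to bring the vertices into the standard local position without creating classical crossings. In particular I should confirm that the correction terms, which attach to the rest of $T$ and $T'$, inherit degree at least $k+k'$ because they carry all the tails of both original trees. Once the local model is correctly set up, each case reduces to a single application of an already-proven relation, so no new computation is needed; the work is entirely in reducing the general adjacency to the standard local configurations covered by moves~(3), (7), and~(12).
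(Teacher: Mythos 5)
Your overall route is the same as the paper's: the corollary is obtained by combining the Tails exchange (3), the Heads exchange (7) and the Generalized Head/Tail exchange (12), and your treatment of the tail/tail case and of the two mixed cases (relation (12) read in either direction, or after relabelling $T$ and $T'$, with the correction tree of degree exactly $k+k'$ because it carries all the tails of both trees) is correct and matches the intended argument.

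There is, however, a genuine error in your head/head case: you claim that the Heads exchange move (7) swaps two adjacent heads ``with no correction term at all'', on a par with the Tails exchange (3). This is false, and it cannot be true: heads, unlike tails, do not commute freely in arrow calculus. Surgery along a $w$-tree conjugates the meridians of the strand carrying its head by a word in the meridians of the strands carrying its tails; two adjacent heads compose these conjugations in order, so exchanging them replaces a product $ww'$ by $w'w$ inside a free group, and the resulting welded objects genuinely differ. (Concretely, for two arrows with heads on component $i$ and tails on components $j$ and $k$, the two orderings of the heads differ by a copy of $G_{i,j,k}^{\pm1}$, which is detected by the closure invariant $\mathcal{I}_{(\overline{j},i,k);2}$, as in the proof of Theorem \ref{classiw3nbr}; this non-commutativity is also behind Lemma \ref{lem:nonab}.) Move (7) is an exact equivalence only because its statement includes an extra $w$-tree, obtained by grafting $T$ and $T'$ together, of degree $k+k'$. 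Your conclusion for this case happens to survive, since that extra tree has degree exactly $k+k'$, hence $\ge k+k'$ as required, but as written your justification rests on a false description of the move it invokes and must be corrected. A smaller point: the Reversal moves (2) change the side of the strand at which an endpoint is attached (introducing beads); they do not interchange the roles of heads and tails, so your alternative argument for the second mixed case should be dropped in favour of the relabelling argument, which suffices.
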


This can be used to rearrange any arrow presentation of an element of $wSL(n)$ into a product of elementary pieces, each obtained by surgery along a single $w$-tree, as in \cite[Lem.~7.15]{Meilhan-Yasuhara}. 

Another noteworthy consequence of Corollary \ref{lem:exchange} is the following additivity property for closure invariants. 
\begin{proposition} \label{clÃŽture et additivitÃ©}
Let $k$ and $k'$ be two integers, and let $R$ be a list.
Let $W$ and $W'$ be unions of $w$-trees for the trivial diagram $\mathbf{1}$, of degree $\ge k$ and $\ge k'$, respectively.
For all $d < k+k'$, we have
$$\mathcal{I}_{R ; d}(\mathbf{1}_{W}\cdot \mathbf{1}_{W'}) = \mathcal{I}_{R ; d}(\mathbf{1}_{W}) + \mathcal{I}_{R ; d}(\mathbf{1}_{W}).$$
\end{proposition}
\begin{proof}
It is an immediate consequence of Corollary \ref{lem:exchange} that 
$Cl_R(\mathbf{1}_{W})\cdot Cl_R(\mathbf{1}_{W'}) \stackrel{k+k'}{\sim} Cl_R(\mathbf{1}_{W}\mathbf{1}_{W'})$. Now, since $\alpha_d$ is a finite type invariant of degree $d$ by Proposition \ref{Alextypefini}, we have $\alpha_d(Cl_R(\mathbf{1}_{W})\cdot Cl_R(\mathbf{1}_{W'}))=\alpha_d(Cl_R(\mathbf{1}_{W}\mathbf{1}_{W'}))$, and the result follows from the additivity property of $\alpha_d$ as given in \cite[Cor.~6.6]{Meilhan-Yasuhara}.
\end{proof}

\subsection{The group of welded string links up to $w_k$-equivalence}\label{sec:grooop}

Let $n, k \in \mathbb{N}^\ast$. 
We denote by $wSL(n)_k$ the set of $w_k$-equivalence classes of $n$-component welded string links. 

As already observed in \cite[\S~7.2]{Meilhan-Yasuhara}, $wSL(n)_1$ is the trivial group for all $n\ge 1$. 
It is also known that $wSL(1)_k$ is a finitely generated abelian group for all $k\ge 1$ \cite[Cor.~8.8]{Meilhan-Yasuhara}. 
In the general case, we have the following results.

\begin{thm}\label{thm:fgg}
For $n, k \in \mathbb{N}^\ast$,  $wSL(n)_k$ is a finitely generated group.
\end{thm}
\begin{proof}
Let us first prove the group structure. 
Let $F$ be a union of $w$-trees of degree $\ge l$  for $\mathbf{1}$ ($l\le k$). 
Consider a union $F'$ of $w$-trees, which consists of a parallel copy of each $w$-tree in $F$, that only differs by a twist $\bullet$ next to the head. 
By the Inversion move (5), we have that $(\mathbf{1},F\cup F')$ is equivalent to $(\mathbf{1},\emptyset)$. 
Now, we can use Corollary \ref{lem:exchange} to move $F'$ above a disk $D$ containing $F$; this introduces a union of $w$-trees $\tilde W$ of degree $\ge 2l$, which may intersect $D$. Next $\tilde W$ can in turn be moved above the disk $D$ using Corollary \ref{lem:exchange}, and this introduces another union of $w$-trees, each of degree $\ge 3l$. 
Iterating this process, we eventually obtain in this way that the trivial diagram $\mathbf{1}$ is $w_k$-equivalent to a product $\mathbf{1}_F\cdot \mathbf{1}_{W}$, where $W$ is a union of $w$-trees, disjoint from $D$. We have thus built the inverse of $\mathbf{1}_F$ up to $w_k$-equivalence.
Now, it remains to observe that the group  $wSL(n)_k$ is finitely generated, since there are only finitely many $w$-trees in each finite degree. 
\end{proof}

\begin{remark} \label{identification d'inverse}
A consequence of this proof, in the case $k = l+1$, is the following. 
If $T$ is a $w_l$-tree for $\mathbf{1}$, then $\mathbf{1}_T\cdot \mathbf{1}_{T^{\bullet}} \stackrel{{l+1}}{\sim} \mathbf{1}$, 
where $T^{\bullet}$ is obtained by inserting a twist $\bullet$ near the head of $T$.
\end{remark}

\begin{proposition}\label{lem:nonab}
The group $wSL(n)_k$ is not abelian for $n\ge 2$ and $k\ge 2$.
\end{proposition}
\begin{proof}
Consider the $2$-component welded string links $D$ and $D'$ shown on the left-hand side of Figure \ref{fig:nonab}. 
\begin{figure}[!h]
\begin{center}
 \includegraphics[scale=0.7]{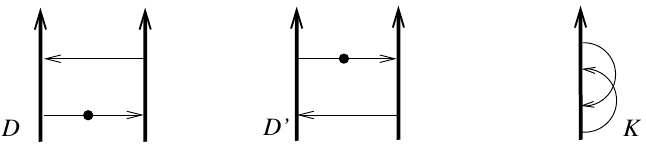} 
 \caption{The welded string links $D$ and $D'$, and the welded long knot $K$} \label{fig:nonab}
\end{center}
\end{figure}
On one hand,  by the Reversal moves (2), the closure $Cl_{(1,2)}(D)$ is the welded long knot $K$ shown on the right-hand side of the figure. 
On the other hand, by the Tails exchange and Isolated arrow moves (3) and (4), the welded long knot $Cl_{(1,2)}(D')$ is trivial. It is easily checked that $\alpha_2(K)=1$, thus proving that the closure invariant $\mathcal{I}_{(1,2);2}$ distinguishes $D$ and $D'$. By Proposition \ref{Alextypefini}, this proves that $D$ and $D'$ are not $w_2$-equivalent, hence not $w_k$-equivalent for any $k\ge 2$. 
\end{proof}

\section{Characterization of low degree invariants of welded string links}\label{sec:SL3}

The main results of this paper follow from a complete description of the group $wSL(n)_k$ for low values of $k$. 

\subsection{Classification of welded string links up to $w_3$-equivalence}

For pairwise distinct indices $i,j,k$, let $Z_{i,j}$, $E_i$, and $G_{i, j, k}$  be the following welded string links:  
\begin{center}
\begin{tikzpicture}
	\draw[<-] [very thick] (3,2) -- (3,0);
	\draw (2.75,0.25) node [below] {$i$};
	\draw (3,1) -- (4,1);
	\draw[->] (4,1) arc(270:450:0.125);
	\draw[<-] [very thick] (4,2) -- (4,0);
	\draw (4.25,0.25) node [below] {$j$};
	\draw (3.5,0) node [below] {$Z_{i, j}$};
\end{tikzpicture}
$\qquad$
\begin{tikzpicture}
	\draw[<-] [very thick] (6,2) -- (6,0);
	\draw (5.75,0.25) node [below] {$i$};
	\draw[<-] (6,1) -- (6.5,1);
	\draw (6,0.5) arc(270:450:0.5);
	\draw (6.1,0) node [below] {$E_i$};
\end{tikzpicture}
$\qquad$
\begin{tikzpicture}	
	\draw[<-] [very thick] (6,2) -- (6,0);
	\draw (5.75,0.25) node [below] {$i$};
	\draw[<-] [very thick] (7,2) -- (7,0);
	\draw (6.75,0.5) node [below] {$j$};
	\draw[<-] [very thick] (8,2) -- (8,0);
	\draw (8.25,0.25) node [below] {$k$};
	\draw[<-] (6,1.5) -- (7.5,1.5);
	\draw (7.5,1.5) -- (7.5,0.5);
	\draw (7,0.5) -- (8,0.5);
	\draw (7,0) node [below] {$G_{i, j, k}$};
\end{tikzpicture}
\end{center}
Furthermore, for all pair of indices $i,j$ such that $i<j$, let $A_{i,j}$, $B_{i,j}$, $C_{i,j}$ and $D_{i,j}$ be the following welded string links:  
\begin{center}
\begin{tikzpicture}
	\draw[<-] [very thick] (0,2) -- (0,0);
	\draw (-0.25,0.25) node [below] {$i$};
	\draw[<-] (0,1.5) -- (0.5,1);
	\draw (0,0.5) -- (0.5,1);
	\draw (0.5,1) -- (1,1);
	\draw[<-] [very thick] (1,2) -- (1,0);
	\draw (1.25,0.25) node [below] {$j$};
	\draw (0.5,0) node [below] {$A_{i, j}$};
\end{tikzpicture}
$\qquad$
\begin{tikzpicture}
	\draw[<-] [very thick] (0,2) -- (0,0);
	\draw (-0.25,0.25) node [below] {$i$};
	\draw (0,1.5) -- (0.5,1);
	\draw[<-] (0,0.5) -- (0.5,1);
	\draw (0.5,1) -- (1,1);
	\draw[<-] [very thick] (1,2) -- (1,0);
	\draw (1.25,0.25) node [below] {$j$};
	\draw (0.5,0) node [below] {$B_{i, j}$};
\end{tikzpicture}
$\qquad$
\begin{tikzpicture}
	\draw[<-] [very thick] (3,2) -- (3,0);
	\draw (2.75,0.25) node [below] {$i$};
	\draw (3,1) -- (3.5,1);
	\draw (3.5,1) -- (4,1.5);
	\draw[->] (4,1.5) arc(270:450:0.125);
	\draw (3.5,1) -- (4,0.5);
	\draw[<-] [very thick] (4,2) -- (4,0);
	\draw (4.25,0.25) node [below] {$j$};
	\draw (3.5,0) node [below] {$C_{i, j}$};
\end{tikzpicture}
$\qquad$
\begin{tikzpicture}	
	\draw[<-] [very thick] (3,2) -- (3,0);
	\draw (2.75,0.25) node [below] {$i$};
	\draw (3,1) -- (3.5,1);
	\draw (3.5,1) -- (4,1.5);
	\draw (3.5,1) -- (4,0.5);
	\draw[->] (4,0.5) arc(90:-90:0.125);
	\draw[<-] [very thick] (4,2) -- (4,0);
	\draw (4.25,0.25) node [below] {$j$};
	\draw (3.5,0) node [below] {$D_{i, j}$};
	 \draw (5.5,0.75) node [above] {$(i<j)$};
\end{tikzpicture}
\end{center}
For each of the above elements $X$ of $wSL(n)$, we also denote by $X^{-1}$ the welded string link obtained by inserting a twist $\bullet$ near the head of the defining $w$-tree.
For $n\ge 0$, we also denote by $X^{-n}$ the product of $n$ copies of $X^{-1}$. 

We note the following, rather non-intuitive relation. 
\begin{proposition}\label{relationABCD}
For all $i, j \in \{1,\cdots,n\}$ such that $i<j$, we have $$A_{i, j}\cdot B_{i, j}\cdot C_{i, j}\cdot D_{i, j} \stackrel{3}{\sim}  \mathbf{1}.$$
\end{proposition}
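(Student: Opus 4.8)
The plan is to establish the relation purely within arrow calculus, transforming the stacked product $A_{i,j}\cdot B_{i,j}\cdot C_{i,j}\cdot D_{i,j}$ into $\mathbf{1}$ modulo $w$-trees of degree $\ge 3$; invariants cannot be invoked to conclude a $w_3$-equivalence at this stage, on pain of circularity with the classification of Subsection~\ref{sec:w3}. The first observation is that each of the four factors is obtained by surgery along a single $w_2$-tree, so by Corollary~\ref{lem:exchange} any two of them commute up to $w_3$-equivalence: exchanging adjacent univalent vertices of two $w_2$-trees produces only $w$-trees of degree $\ge 4$. Thus the four factors may be freely reordered, and their univalent vertices clustered into a small ball, up to $w_3$-equivalence.

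The second, and decisive, step is to use the antisymmetry relations \eqref{eq:relations} to eliminate $C_{i,j}$ and $D_{i,j}$. Reading \eqref{eq:relations} with the indices $i,j$ interchanged gives $A_{j,i}\stackrel{3}{\sim} C_{i,j}^{-1}$ and $B_{j,i}\stackrel{3}{\sim} D_{i,j}^{-1}$, hence $C_{i,j}\stackrel{3}{\sim} A_{j,i}^{-1}$ and $D_{i,j}\stackrel{3}{\sim} B_{j,i}^{-1}$. Substituting and regrouping by means of the commutativity just noted, the statement to prove becomes the symmetry
$$A_{i,j}\cdot B_{i,j}\stackrel{3}{\sim} A_{j,i}\cdot B_{j,i}.$$
This is the genuine content of the proposition, and it explains why the relation does not split as $A_{i,j}B_{i,j}\stackrel{3}{\sim}\mathbf{1}$ and $C_{i,j}D_{i,j}\stackrel{3}{\sim}\mathbf{1}$: the element $A_{i,j}B_{i,j}$ need not itself be trivial, since its two heads, both on component $i$, sit in opposite cyclic order at their trivalent vertices, so that its length-$3$ Milnor data cancels while a nontrivial self-type closure contribution may survive. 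Only the symmetry of this element under $i\leftrightarrow j$ makes the full product collapse.

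To prove this symmetry I would work directly with the two $w_2$-trees of $A_{i,j}B_{i,j}$, whose tails lie one on $i$ and one on $j$ and whose heads both lie on $i$. The idea is to migrate the heads from component $i$ to component $j$ through a sequence of Head/Tail exchanges, combining the exact move~(8) with its higher-degree refinement~(12): each exchange trades a head on $i$ against the adjacent tail on $i$ at the cost of a $w$-tree of degree $\ge 3$, invisible up to $w_3$-equivalence. The reversal moves~(2), the Antisymmetry move~(9) and the Twist relation~(11) are then used to normalise the resulting beads and cyclic orders so as to recognise a copy of $A_{j,i}B_{j,i}$. The main obstacle is precisely this bookkeeping of beads and of the cyclic orders at the trivalent vertices through the head migrations: one must verify that every correction term generated along the way has degree $\ge 3$, and that the accumulated beads reproduce exactly $A_{j,i}B_{j,i}$ rather than its inverse. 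This sign control is what makes the relation, as the text remarks, far less transparent than \eqref{eq:relations}.
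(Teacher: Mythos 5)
Your first two steps are correct but carry no content: the commutativity of the four factors up to $w_3$-equivalence (Corollary \ref{lem:exchange}) and the substitution $C_{i,j}\stackrel{3}{\sim}A_{j,i}^{-1}$, $D_{i,j}\stackrel{3}{\sim}B_{j,i}^{-1}$ from (\ref{eq:relations}) merely reformulate the proposition as the symmetry $A_{i,j}\cdot B_{i,j}\stackrel{3}{\sim}A_{j,i}\cdot B_{j,i}$, which is exactly as hard as the original statement. The entire burden of proof thus rests on your third step, and that step is both incomplete (you yourself flag the ``main obstacle'' as unresolved) and, more seriously, based on a mechanism that cannot work. The moves you allow yourself --- the reversals (2), the exchanges (3), (7), (8), (12) and Corollary \ref{lem:exchange}, Antisymmetry (9) and Twist (11) --- never change the component to which a head of a $w$-tree is attached: they only reorder endpoints along a strand, flip sides or cyclic orders at the cost of beads, and produce correction trees of degree $\ge 3$ (in fact $\ge 4$ when applied to two intact $w_2$-trees). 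Consequently, starting from $A_{i,j}\cup B_{i,j}$, whose two degree-$2$ heads lie on component $i$, every arrow presentation you can reach by such moves still has its degree-$2$ trees with heads on component $i$, up to junk of degree $\ge 3$; you can therefore never ``recognise a copy of $A_{j,i}B_{j,i}$'', whose degree-$2$ heads lie on component $j$. There is no head migration between components at this level of arrow calculus.

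The missing idea --- and the key step of the paper's proof --- is the Expansion move (E). The paper expands $A_{i,j}$ into its underlying union of $w_1$-trees, and only then applies the Slide move (6) and Head/Tail exchanges (8), now at the level of arrows. There the correction terms produced by an exchange have degree $1+1=2$: they are \emph{visible} modulo $w_3$-equivalence, and they turn out to be precisely copies of $C_{i,j}^{-1}$, $D_{i,j}^{-1}$ and $B_{i,j}^{-1}$ (identified via the reversal move (2) and the Twist relation (11), and isolated from the rest of the diagram by Corollary \ref{lem:exchange}); the residual arrows are then deleted by the Inversion move (5), yielding $A_{i,j}\stackrel{3}{\sim}(B_{i,j}\cdot C_{i,j}\cdot D_{i,j})^{-1}$ directly, with no need for the reduction to a symmetry statement. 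In short, the relation of Proposition \ref{relationABCD} is generated by degree-$2$ corrections to exchanges of degree-$1$ trees, a phenomenon that is invisible in your framework where $A_{i,j}$ and $B_{i,j}$ are kept intact; any successful proof must break the $w_2$-trees apart.
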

\begin{proof}
Starting with the union $U$ of four $w_1$-trees shown on the left-hand side below, the result follows from a sequence of Head/Tail exchange moves (8). In the figures, we indicate by an $\ast$ the place where each such move is performed. 
On one hand, we have the following sequence of equivalences: 
 $$ 
    U:\ \dessin{1.9cm}{B0} \ \stackrel{3}{\sim} \ \dessin{1.9cm}{B1}\ \cdot \ A_{i,j}^{-1}\stackrel{3}{\sim} \  \dessin{1.9cm}{B1bis}\ \cdot \ A_{i,j}^{-1}\cdot C_{i,j}^{-1} \ \stackrel{3}{\sim} \ A_{i,j}^{-1}\cdot C_{i,j}^{-1}. 
 $$
Here, the first equivalence is given by the Head/Tail exchange move (8) on the $j$th component of $\mathbf{1}$.  The $w_2$-tree created by this move  is a copy of $A_{i,j}^{-1}$, by the Twist move (11), which can be isolated from the rest of the diagram using Corollary \ref{lem:exchange}. 
The second equivalence above is given by the Head/Tail exchange move (8) on the $i$th component. 
This introduces a $w_2$-tree which is a copy of $C_{i,j}^{-1}$ by the Head reversal move (2) and the Twist move (11), and which can  also be isolated by Corollary \ref{lem:exchange}. The third equivalence then follows directly from the Inversion move (5). 
On the other hand, starting with the same union $U$ of $w_1$-trees, one can perform a similar sequence of Head/Tail exchange moves, but in the opposite order. This gives the following sequence of equivalences: 
 $$ 
    U:\dessin{1.9cm}{B0bis} \ \stackrel{3}{\sim} \ \dessin{1.9cm}{B2}\ \cdot \ B_{i,j}\stackrel{3}{\sim} \  B_{i,j}\cdot D_{i,j}. 
 $$
 This shows the desired relation $A_{i, j}\cdot B_{i, j}\cdot C_{i, j}\cdot D_{i, j} \stackrel{3}{\sim}  \mathbf{1}$ by Theorem \ref{thm:fgg}. 
\end{proof}

\begin{notation} \label{notaw3nbr}
For pairwise distinct indices $i, j,k \in \{1,\cdots,n\}$, we set
\begin{enumerate}
	\item[(i). ] $\!\!\!\!\phi_{i, j, k} =  \mathcal{I}_{(\overline{j}, i,  \overline{k}) ; 2} - \mathcal{I}_{(\overline{i}, j) ; 2} 
         - \mathcal{I}_{(i,\overline{k}) ; 2} - \mathcal{I}_{(k,j) ; 2} +  \mathcal{I}_{(i) ; 2} + \mathcal{I}_{(j) ; 2} + \mathcal{I}_{(k) ; 2} - \mu(ji) \mu(ki)$; 
	\item[(ii). ]  
		$\alpha_{i, j} = \mathcal{I}_{(i, \overline{j}) ; 2} + \mathcal{I}_{(\overline{i},j) ; 2}  -\mathcal{I}_{(j,i) ; 2} - \mathcal{I}_{(i) ; 2} - \mathcal{I}_{(j) ; 2}$;
	\item[(iii). ]  
		$\beta_{i, j} = \mathcal{I}_{(\overline{i}, j) ; 2} - \mathcal{I}_{(i) ; 2} - \mathcal{I}_{(j) ; 2}$;
	\item[(iv). ]  
		$\gamma_{i, j} = \mathcal{I}_{(\overline{i},j) ; 2} - \mathcal{I}_{(j,i) ; 2}$. 
\end{enumerate}
\end{notation}

\begin{thm} \label{classiw3nbr}
Let $L$ be an $n$-component welded string link. We have 
$$L \stackrel{3}{\sim} L_1 \cdot L_2,$$
where $L_1=\prod_{i \neq j} Z_{i, j}^{\mu_L(ij)}$  with product taken according to the lexicographic order on  $(i,j)$, and 
$$L_2 = \prod_{1 \leq i \leq n} E_i^{\mathcal{I}_{(i) ; 2}(L)}\ \prod_{1 \leq i < j \leq n} A_{i, j}^{\alpha_{i, j}(L)} B_{i, j}^{\beta_{i, j}(L)} C_{i, j}^{\gamma_{i, j}(L)}\ \prod_{\substack{1 \leq i \leq n \\ 1 \leq j < k \leq n \\ i \neq j, i \neq k}} G_{i, j, k}^{\phi_{i, j, k}(L)}.$$
\end{thm}

\begin{corollary} \label{corow3}
The following are equivalent.
\begin{enumerate}
	\item Two welded string links $L$ and $L'$ are $w_3$-equivalent;
	\item For any finite type invariant  $\nu$ of degree at most $2$, we have $\nu(L) = \nu(L')$;
	\item $L$ and $L'$ have same linking numbers  $\mu(ij)$, and same closure invariants 
		$\mathcal{I}_{(i) ; 2}$ for all $i$, $\mathcal{I}_{(j,i) ; 2}$, $\mathcal{I}_{(i, \overline{j}) ; 2}$ and  
			 $\mathcal{I}_{(\overline{i}, j) ; 2}$ for all $i<j$, and 
			 $\mathcal{I}_{(\overline{j}, i, \overline{k}) ; 2}$ for all pairwise distinct $i,j,k$ such that $j < k$.  
      \end{enumerate}
\end{corollary}
\begin{proof}
The fact that $(1)$ implies $(2)$ is given by Proposition \ref{thm:wkFTI}, and since all invariants listed in (3) are degree $\le 2$ invariants (Lemma \ref{Milnortypefini} and Corollary \ref{cor:fticlose}), we have implication $(2)\Rightarrow (3)$. Theorem \ref{classiw3nbr} shows that $(3)$ implies $(1)$.
\end{proof}

\begin{remark}\label{rem:w2}
Theorem \ref{classiw3nbr} also implies that a welded string $L$ is $w_2$-equivalent to the product $\prod_{i \neq j} Z_{i, j}^{\mu_L(ij)}$. As a direct consequence, we have that two welded string links %$L$ and $L'$ 
are $w_2$-equivalent if and only if they have same welded linking numbers $\mu(ij)$, a result that was first proved in \cite{ABMW3}. 
\end{remark}

\begin{proof}[Proof of Theorem \ref{classiw3nbr}]
We start with an arbitrary $w$-tree presentation of $L$.
By Corollary \ref{lem:exchange}, we have that $L$ is $w_3$-equivalent to $L'_1\cdot L'_2$, 
where $L'_i$ is a product of welded string links, each obtained from $\mathbf{1}$ by surgery along a single $w_i$-tree ($i=1,2$).

By the  involutivity of twists, the Reversal moves (2) and the Isolated arrow move (3), 
we can freely assume that each $w_1$-tree in $L'_1$ is a copy of either $Z_{i, j}$ or $Z_{i, j}^{-1}$. By Remark \ref{identification d'inverse}, we thus have 
 $$L \stackrel{3}{\sim} \prod_{i \neq j} Z_{i, j}^{e_{i, j}}\cdot L'_2,$$
for some coefficients $e_{i, j}\in \mathbb{Z}$. 
Observe that the welded linking numbers are additive under stacking, and recall that they are $w_2$-equivalence invariants by Lemma \ref{Milnortypefini} and Proposition \ref{thm:wkFTI}. 
Hence, applying $\mu(kl)$  (for some $k\neq l$) to this equivalence gives
$$\mu_L(kl) = \sum_{i \neq j} e_{i, j}\times \mu_{Z_{i,j}}(kl).$$
An elementary computation gives that $\mu_{Z_{i,j}}(kl) = \delta_{ik} \delta_{jl}$, hence 
$e_{i, j}=\mu_L(ij)$ for any pair $(i, j)$ of distinct integers.

We now focus on $L'_2$. 
Consider a $w_2$-tree $T$ for $\mathbf{1}$, such that $\mathbf{1}_T$ is a factor of $L'_2$. 
Let us first show that, up to $w_3$-equivalence, $T$ can be assumed to be a copy of $E_i^{\pm 1}$, $A_{i, j}^{\pm 1}$, $B_{i, j}^{\pm 1}$, $C_{i, j}^{\pm 1}$ or $G_{i, j, k}^{\pm 1}$.
Suppose first that all three endpoints of $T$ are on the same component, say component $i$. Then by the Fork move (10), $\mathbf{1}_T$ is nontrivial 
only if the head of $T$ is located between both tails of $T$, and the Reversal moves (2), AS move (9) and Twist relation (11) 
ensure that $T$ is necessarily a copy of either $E_i$ or $E_i^{-1}$. 
In the case where $T$ is attached to exactly two components of $\mathbf{1}$,  say $i$ and $j$, then the same combinatorial arguments give that $T$ 
can be freely assumed to be a copy of $A_{i, j}^{\pm 1}$, $B_{i, j}^{\pm 1}$, $C_{i, j}^{\pm 1}$ or $D_{i, j}^{\pm 1}$. Hence by Proposition \ref{relationABCD}, we can further assume that $T$ is either $A_{i, j}^{\pm 1}$, $B_{i, j}^{\pm 1}$ or $C_{i, j}^{\pm 1}$ with $i<j$.
Finally, in the case where the three endpoints of $T$ lie on pairwise distinct components $i,j,k$, then the same considerations show that $T$ is a copy of $G_{i, j, k}^{\pm 1}$ for pairwise distinct indices $i,j,k$ such that $j<k$. 
Moreover by Corollary \ref{lem:exchange}, any two factors obtained from $\mathbf{1}$ by surgery along a $w_2$-tree, commute up to $w_3$-equivalence. 
Summarizing, we have proved that 
\begin{equation} \label{eqnw3nbr}
L \stackrel{3}{\sim} \underbrace{\prod_{ i \neq j}
Z_{i, j}^{\mu_L(ij)}}_{ = L_1}\ \prod_{1 \leq i \leq n} E_i^{e_i}\ \prod_{1 \leq i < j \leq n} A_{i, j}^{a_{i, j}} B_{i, j}^{b_{i, j}} C_{i, j}^{c_{i, j}}\ \prod_{\substack{1 \leq i \leq n \\ 1 \leq j < k \leq n \\ i \neq j, i \neq k}} G_{i, j, k}^{g_{i, j, k}},
 \end{equation}
for some $g_{i, j, k}$, $a_{i, j}$, $b_{i, j}$, $c_{i, j}$ and $e_i$ in $\mathbb{Z}$.
In what follows, for convenience we shall call \emph{basic factor} any factor appearing in the product (\ref{eqnw3nbr}). 
\\
Consider the invariant $\mathcal{I}_{(i) ; 2}$, which is the normalized Alexander coefficient $\alpha_2$ of the $i$th component.  By \cite[Lem.~6.4]{Meilhan-Yasuhara}, we have that  $\mathcal{I}_{(i) ; 2}(E_i^{\pm 1}) = \pm 1$ and $\mathcal{I}_{(i) ; 2}$ vanishes on any other basic factor.
Recall that $\mathcal{I}_{(i) ; 2}$ is an invariant of $w_3$-equivalence (Theorem \ref{thm:wkFTI}). By the additivity property of Proposition \ref{clÃŽture et additivitÃ©}, evaluating $\mathcal{I}_{(i) ; 2}$ on $(\ref{eqnw3nbr})$ thus gives us that $e_i = \mathcal{I}_{(i) ; 2}(L)$.
\\
Next we evaluate the closure invariants $\mathcal{I}_{(i, \overline{j})}$, $\mathcal{I}_{(\overline{i}, j)}$ and $\mathcal{I}_{(i, j)}$ ($i<j$) on the following basic factors: 
$$\begin{pmatrix}
\smallsetminus & A_{i, j} & B_{i, j} & C_{i, j} & E_i & E_j \\
\mathcal{I}_{(i, \overline{j}) ; 2} : & 1 & 0 & -1 & 1 & 1 \\
\mathcal{I}_{(\overline{i}, j) ; 2} : & 0 & 1 & 0 & 1 & 1 \\
\mathcal{I}_{(j, i) ; 2} : & 0 & 1 & -1 & 1 & 1  \\
\end{pmatrix}$$
Moreover, these three closure invariants vanish on $L_1$, and on all basic factors $G_{i, j, k}$. 
By Theorem \ref{thm:wkFTI} and Proposition \ref{clÃŽture et additivitÃ©}, evaluating these invariants on $(\ref{eqnw3nbr})$ gives: 
\begin{eqnarray*}
	\mathcal{I}_{(i, \overline{j}) ; 2}(L) & = & a_{i, j} - c_{i, j} + e_i + e_j,\\
	\mathcal{I}_{(\overline{i}, j) ; 2}(L)& = & b_{i, j} + e_i + e_j,\\
	\mathcal{I}_{(j,i) ; 2}(L) & = & b_{i, j} - c_{i, j} + e_i + e_j.
\end{eqnarray*}
Consequently, we have
\begin{enumerate}
	\item[ ] $a_{i, j} = \mathcal{I}_{(i, \overline{j}) ; 2}(L) + \mathcal{I}_{(\overline{i},j) ; 2}(L)  -\mathcal{I}_{(j,i) ; 2}(L) - \mathcal{I}_{(i) ; 2}(L) - \mathcal{I}_{(j) ; 2}(L)$;
	\item[ ] $b_{i, j} = \mathcal{I}_{(\overline{i}, j) ; 2}(L) - \mathcal{I}_{(i) ; 2}(L) - \mathcal{I}_{(j) ; 2}(L)$;
	\item[ ] $c_{i, j} =  \mathcal{I}_{(\overline{i},j) ; 2}(L) - \mathcal{I}_{(j,i) ; 2}(L)$.
\end{enumerate}
Finally, the closure invariant $\mathcal{I}_{(\overline{j}, i, k) ; 2}$ takes the following values on basic factors: 
$$\setcounter{MaxMatrixCols}{22}
\arraycolsep=3pt
\begin{pmatrix}
\smallsetminus\! &\! G_{i, j, k}\! &\! A_{i, j}\! &\! B_{i, j}\! &\! C_{i, j}\! &\! A_{i, k}\! &\! B_{i, k}\! &\! C_{i, k}\! &\! A_{j, k}\! &\! B_{j, k}\! &\! C_{j, k}\! &\! E_i\! &\! E_j\! &\! E_k\! &\! \star \\
\mathcal{I}_{(\overline{j}, i, \overline{k}) ; 2} : & 1 & 0 & 1 & 0 & 1 & 0 & -1 & 0 & 1 & -1 & 1 & 1 & 1 & 0
\end{pmatrix}$$
where $\star$ stands for any other basic factor. 
Hence by Theorem \ref{thm:wkFTI} and Proposition \ref{clÃŽture et additivitÃ©}, we have: 
$$\mathcal{I}_{(\overline{j}, i, \overline{k}) ; 2}(L) - \mathcal{I}_{(\overline{j}, i, \overline{k}) ; 2}(L_1) = g_{i, j, k} + b_{i, j} + a_{i, k} - c_{i, k} + b_{j, k} - c_{j, k} + e_i + e_j + e_k. $$
Here however, unlike in the preceding computation, $\mathcal{I}_{(\overline{j}, i, k) ; 2}$ does not vanish on $L_1$.
Indeed, the closure $Cl_{(\overline{j},i,k)}$ of the following diagram 
\begin{center}
\begin{tikzpicture}
	\draw[<-] [very thick] (0,1.5) -- (0,0);
	\draw (-0.25,0.25) node [below] {$i$};
	\draw[<-] [very thick] (1,1.5) -- (1,0);
	\draw (0.75,0.25) node [below] {$j$};
	\draw[<-] (0,1) -- (1,1);
	\draw[<-] [very thick] (2,1.5) -- (2,0);
	\draw (1.75,0.25) node [below] {$k$};
	\draw[<-] (0,0.5) -- (2,0.5);
\end{tikzpicture}
\end{center}
yields the welded long knot $K$ of Figure \ref{fig:nonab}, which satisfies $\alpha_2(K)=1$. 
Based on this observation, a computation shows that (see \cite[Lem.~2.3.18]{Casejuane}): 
$$\mathcal{I}_{(\overline{j}, i, \overline{k}) ; 2}(L_1)=\mu_L(ji)\mu_L(ki).$$ 
It follows that 
\begin{eqnarray*}
	g_{i, j, k} & = & \mathcal{I}_{(\overline{j}, i,  \overline{k}) ; 2}(L)  - \mu_L(ji)\mu_L(ki) 
	- b_{i, j} - a_{i, k} + c_{i, k} - b_{j, k} + c_{j, k} - e_i - e_j - e_k\\
   & =  & \mathcal{I}_{(\overline{j}, i,  \overline{k}) ; 2}(L) - \mathcal{I}_{(\overline{i}, j) ; 2}(L) 
         - \mathcal{I}_{(i,\overline{k}) ; 2}(L) 
         -\mathcal{I}_{(k,j) ; 2}(L) \\
   &   & \,+\, \mathcal{I}_{(i) ; 2}(L) + \mathcal{I}_{(j) ; 2}(L) + \mathcal{I}_{(k) ; 2}(L) - \mu_L(ji) \mu_L(ki).
\end{eqnarray*}
This gives the desired formula.
\end{proof}

A notable observation about Corollary \ref{corow3} is that degree $2$ finite type invariants of welded string links are generated by closure invariants -- while degree $1$ invariants are generated by the welded linking numbers. 
This means that any other degree $2$ invariant can be expressed as a linear combination of (products of) such invariants, and Theorem \ref{classiw3nbr} can be used effectively to make this explicit. 
The next result gives such a formula for length $3$ welded Milnor invariants. % $\mu(ijk)$.
\begin{proposition} \label{mu_123}
Let $i,j,k$ be pairwise distinct indices. We have 
\begin{eqnarray*}
\mu(ijk) & = & \mu(ji)\mu(ik) - \mu(ij)\mu(jk) - \mu(ik) \mu(jk) \\
& &
 +\, \mathcal{I}_{(\overline{i}, k,  \overline{j}) ; 2} - \mathcal{I}_{(\overline{k}, i) ; 2} 
         - \mathcal{I}_{(k,\overline{j}) ; 2} - \mathcal{I}_{(j,i) ; 2} + \mathcal{I}_{(i) ; 2} + \mathcal{I}_{(j) ; 2} +  \mathcal{I}_{(k) ; 2} .
\end{eqnarray*}
\end{proposition}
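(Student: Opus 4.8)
The plan is to use that $\mu(123)$ is a $w_3$-equivalence invariant and to read it off the normal form of Theorem \ref{classiw3nbr}.

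First I would observe that $\mu(123)$, being the welded Milnor invariant of a length $3$ sequence, is a finite type invariant of degree $2$ (as recorded after Lemma \ref{Alextypefini}); hence by Proposition \ref{thm:wkFTI} it is invariant under $w_3$-equivalence. The right-hand side of the claimed identity is likewise a $w_3$-invariant, being a polynomial in welded linking numbers and degree $2$ closure invariants. It therefore suffices to evaluate both sides on the normal form $L \stackrel{3}{\sim} L_1\cdot L_2$ furnished by Theorem \ref{classiw3nbr}, for an arbitrary $L$.

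Next I would compute $\mu(123)(L_1\cdot L_2)$. Because $\mu(123)$ is a degree $2$ invariant and $L_2$ consists of degree $2$ building blocks, all cross terms between $L_1$ and $L_2$, as well as those internal to $L_2$, are of order $\ge 3$ and drop out, so that $\mu(123)(L_1\cdot L_2) = \mu(123)(L_1) + \mu(123)(L_2)$. For the factor $L_2$, the blocks $E_i$, $A_{i,j}$, $B_{i,j}$, $C_{i,j}$ are supported on at most two components and so carry no triple invariant; a direct diagrammatic computation of $\mu(123)$ on the three-component blocks (using the diagrammatic welded Milnor invariants of \cite{Miyazawa-Wada-Yasuhara, Casejuane}) shows that $\mu(123)$ vanishes on $G_{1,2,3}$ and $G_{2,1,3}$ and equals $1$ on $G_{3,1,2}$, whence $\mu(123)(L_2) = g_{3,1,2}$, the exponent of $G_{3,1,2}$ in $L_2$. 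For the factor $L_1 = \prod_{i\neq j}Z_{i,j}^{\mu_L(ij)}$, evaluating $\mu(123)$ on this product of degree $1$ blocks returns a quadratic expression in the linking numbers; the computation, parallel to that of \cite[Lem.~2.3.18]{Casejuane}, yields $\mu(123)(L_1) = \mu(21)\mu(13) - \mu(12)\mu(23)$.

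Combining these gives $\mu(123)(L) = g_{3,1,2}(L) + \mu(21)\mu(13) - \mu(12)\mu(23)$. Substituting the explicit value of $g_{3,1,2}$ from Theorem \ref{classiw3nbr},
\[
g_{3,1,2} = \mathcal{I}_{(\overline{1}, 3, 2) ; 2} - \mathcal{I}_{(3,2) ; 2} - \mathcal{I}_{(\overline{3}, 1) ; 2} - \mathcal{I}_{(\overline{1}, 2) ; 2} + \mathcal{I}_{(3) ; 2} + \mathcal{I}_{(1) ; 2} + \mathcal{I}_{(2) ; 2} - \mu(13)\mu(23),
\]
the term $-\mu(13)\mu(23)$ reappears and produces exactly the stated formula. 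The main obstacle I expect is the honest evaluation of $\mu(123)$ on the generators: since the triple Milnor invariant is not additive under stacking, one must carefully track its quadratic correction terms, verify that it vanishes on every two-component block, and confirm that among the three-component blocks only $G_{3,1,2}$ contributes --- any nonzero contribution from $G_{1,2,3}$ or $G_{2,1,3}$ would introduce closure invariants absent from the final formula, so this vanishing is precisely what makes the clean expression possible.
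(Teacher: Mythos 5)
Your proof is correct and follows essentially the same route as the paper: both evaluate $\mu(123)$ on the normal form of Theorem \ref{classiw3nbr}, use additivity (the paper cites \cite[Lem.~6.11]{Meilhan-Yasuhara} for this) to reduce to the generators, observe that only $G_{3,1,2}$ contributes among the degree $2$ blocks with $\mu_{G_{3,1,2}}(123)=1$, compute $\mu_{L_1}(123) = \mu(21)\mu(13) - \mu(12)\mu(23)$, and substitute the exponent $\phi_{3,1,2}(L)$ to obtain the formula.
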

\begin{proof}
It suffices to prove the result for $\mu_L(123)$, where $L$ is a $3$-component welded string link. 
Since $\mu(123)$ is a degree $2$ invariant, and using the additivity property of \cite[Lem.~6.11]{Meilhan-Yasuhara}, evaluating on the $w_3$-equivalence class representative of Theorem \ref{classiw3nbr} gives 
$$\mu_L(123) =  \mu_{L_1}(123)+ \mu_{L_2}(123) =  \mu_{L_1}(123) + \phi_{3,1,2}(L)\times \underbrace{\mu_{G_{3,1,2}}(123)}_{=1}.\\[-0.3cm]$$
A direct computation gives 
 $$\mu_{L_1}(123) = \mu_L(21)\mu_L(13) - \mu_L(12)\mu_L(23),$$
and the desired formula then follows from the definition of the invariant $\phi_{3,1,2}$. 
\end{proof}

\begin{remark}
The main result of this section, Corollary \ref{corow3}, should be seen as a welded analogue of \cite[Thm.~4.23]{jbjktr}, as restated in \cite[Thm.~2.2]{MY3}, 
in the classical case. There, it is shown that two classical string links are $C_3$-equivalent if and only if  they have same Vassiliev invariants of degree $<3$, 
which is equivalent to having same linking numbers, Milnor's triple linking numbers, Casson knot invariants of each component, and a closure-type invariant, 
namely the Casson invariant of the closure $Cl_{1,\overline{2}}$. Observe that the welded case of Corollary \ref{corow3} involves a significantly greater number 
of invariants. Now, Proposition 2.10 of  \cite{jbjktr} expresses the classical triple linking number in terms of closure invariants\footnote{
Proposition \ref{mu_123} is to be compared to \cite[Prop.~2.10]{jbjktr} (this formula  was later widely generalized in \cite{MY_GT}).}. 
This shows that, in the classical setting as well, degree $2$ invariants are generated by closure invariants. It follows from the results of \cite{MY3} and \cite{MY_GT} that this remains true at least up to degree $5$; see Remark \ref{rmqpourplustard} for the welded case. 
\end{remark}

\subsection{Towards a $w_4$-classification of welded string links}\label{sec:A}

As indicated in the Introduction, the characterization of degree $3$ finite type invariants of  welded string links and ribbon tubes, was investigated in detail \cite{Casejuane}, but a complete result is not known. 
In this final section, we outline the $2$-component case, referring the reader to \cite{Casejuane} for the general case. We expect that this exploratory section will lay the ground for future works. 

Consider the welded long knots $F$ and $F'$, and the $2$-component welded string links $A, B, C, D$ and $TO_i, OT_i$ ($i=1,2,3,4$) shown below. 
\begin{center}
\begin{tikzpicture}
	\draw[<-] [very thick] (0,2) -- (0,0);
	\draw[<-] (0,1.33) -- (0.33,1);
	\draw (0.33,1) -- (0.66,1);
	\draw (0.33,1) -- (0,0.66);
	\draw (0,0.33) arc(-90:90:0.66);
	\draw (0,0) node [below] {$F$};
\end{tikzpicture}
$\quad\,\,\,\,$
\begin{tikzpicture}
	\draw[<-] [very thick] (0,2) -- (0,0);
	\draw[<-] (0,1.5) -- (1,1.5);
	\draw (0.5,1.5) -- (0.5,0.5);
	\draw (0,0.5) -- (1,0.5);
	\draw[<-] [very thick] (1,2) -- (1,0);
	\draw (0.5,0) node [below] {$A$};
\end{tikzpicture}
$\quad\,\,\,\,$
\begin{tikzpicture}
	\draw[<-] [very thick] (0,2) -- (0,0);
	\draw (0,1.5) -- (1,1.5);
	\draw (0.5,1.5) -- (0.5,0.5);
	\draw[<-] (0,0.5) -- (1,0.5);
	\draw[<-] [very thick] (1,2) -- (1,0);
	\draw (0.5,0) node [below] {$B$};
\end{tikzpicture}
$\quad\,\,\,\,$
\begin{tikzpicture}
	\draw[<-] [very thick] (0,2.5) -- (0,0);
	\draw[->] (0,0.5) arc(270:450:0.75);
	\draw (0,1.5) -- (0.7,1.5);
	\draw (0.7,1) -- (1,1);
	\draw[<-] [very thick] (1,2.5) -- (1,0);
	\draw (0.5,0) node [below] {$TO_1$};
\end{tikzpicture}
$\quad\,\,\,\,$
\begin{tikzpicture}	\draw[<-] [very thick] (3,2.5) -- (3,0);
	\draw (4,0.5) arc(270:90:0.75);
	\draw[->] (4,2) arc(270:450:0.125);
	\draw (3.3,1.5) -- (4,1.5);
	\draw (3,1) -- (3.3,1);
	\draw[<-] [very thick] (4,2.5) -- (4,0);
	\draw (3.5,0) node [below] {$OT_1$};
\end{tikzpicture}
$\quad\,\,\,\,$
\begin{tikzpicture}
	\draw[<-] [very thick] (0,2.5) -- (0,0);
	\draw (0,0.5) arc(270:450:0.75);
	\draw[<-] (0,1.5) -- (0.7,1.5);
	\draw (0.7,1) -- (1,1);
	\draw[<-] [very thick] (1,2.5) -- (1,0);
	\draw (0.5,0) node [below] {$TO_2$};
\end{tikzpicture}
$\quad\,\,\,\,$
\begin{tikzpicture}	
        \draw[<-] [very thick] (3,2.5) -- (3,0);
	\draw (4,0.5) arc(270:90:0.75);
	\draw (3.3,1.5) -- (4,1.5);
	\draw[->] (4,1.5) arc(270:450:0.125);
	\draw (3,1) -- (3.3,1);
	\draw[<-] [very thick] (4,2.5) -- (4,0);
	\draw (3.5,0) node [below] {$OT_2$};
\end{tikzpicture}
\end{center}
\begin{center}
\begin{tikzpicture}
	\draw[<-] [very thick] (0,2) -- (0,0);
	\draw (0,1.33) -- (0.33,1);
	\draw (0.33,1) -- (0.66,1);
	\draw[->] (0.33,1) -- (0,0.66);
	\draw (0,0.33) arc(-90:90:0.66);
	\draw (0,0) node [below] {$F'$};
\end{tikzpicture}
$\quad\,\,\,\,$
\begin{tikzpicture}	\draw[<-] [very thick] (3,2) -- (3,0);
	\draw (3,1.5) -- (4,1.5);
	\draw[->] (4,1.5) arc(270:450:0.125);
	\draw (3.5,1.5) -- (3.5,0.5);
	\draw (3,0.5) -- (4,0.5);
	\draw[<-] [very thick] (4,2) -- (4,0);
	\draw (3.5,0) node [below] {$C$};
	\end{tikzpicture}
$\quad\,\,\,\,$
\begin{tikzpicture}	\draw[<-] [very thick] (3,2) -- (3,0);
	\draw (3,1.5) -- (4,1.5);
	\draw (3.5,1.5) -- (3.5,0.5);
	\draw (3,0.5) -- (4,0.5);
	\draw[->] (4,0.5) arc(450:270:0.125);
	\draw[<-] [very thick] (4,2) -- (4,0);
	\draw (3.5,0) node [below] {$D$};
\end{tikzpicture}
$\quad\,\,\,\,$
\begin{tikzpicture}
	\draw[<-] [very thick] (0,2.5) -- (0,0);
	\draw (0,0.5) arc(270:450:0.75);
	\draw (0.7,1.5) -- (1,1.5);
	\draw[<-] (0,1) -- (0.7,1);
	\draw[<-] [very thick] (1,2.5) -- (1,0);
	\draw (0.5,0) node [below] {$TO_3$};
\end{tikzpicture}
$\quad\,\,\,\,$
\begin{tikzpicture}	\draw[<-] [very thick] (3,2.5) -- (3,0);
	\draw (4,0.5) arc(270:90:0.75);
	\draw (3,1.5) -- (3.3,1.5);
	\draw[->] (4,1) arc(270:450:0.125);
	\draw (3.3,1) -- (4,1);
	\draw[<-] [very thick] (4,2.5) -- (4,0);
	\draw (3.5,0) node [below] {$OT_3$};
\end{tikzpicture}
$\quad\,\,\,\,$
\begin{tikzpicture}
	\draw[<-] [very thick] (0,2.5) -- (0,0);
	\draw[<-] (0,0.5) arc(270:450:0.75);
	\draw (0.7,1.5) -- (1,1.5);
	\draw (0,1) -- (0.7,1);
	\draw[<-] [very thick] (1,2.5) -- (1,0);
	\draw (0.5,0) node [below] {$TO_4$};
\end{tikzpicture}
$\quad\,\,\,\,$
\begin{tikzpicture}	\draw[<-] [very thick] (3,2.5) -- (3,0);
	\draw (4,0.5) arc(270:90:0.75);
	\draw[->] (4,0.5) arc(270:450:0.125);
	\draw (3,1.5) -- (3.3,1.5);
	\draw (3.3,1) -- (4,1);
	\draw[<-] [very thick] (4,2.5) -- (4,0);
	\draw (3.5,0) node [below] {$OT_4$};
\end{tikzpicture}
\end{center}

For $i=1,2$, we denote by $F_i$ the $2$-component welded string link obtained from the trivial one by inserting a copy of $F$ on the $i$th component. 
We also denote with a superscript $-1$ the welded string links obtained from the above ones by inserting a $\bullet$ near the head, which by Remark \ref{identification d'inverse} defines the inverse up to $w_4$-equivalence. 

We saw in Section \ref{sec:SL3} that the abelian group $wSL(2)_3$ is generated by the welded string links $Z_{1,2}$, $Z_{2,1}$, $E_1$, $E_2$, $A_{1,2}$, $B_{1,2}$ and $C_{1,2}$. 
In order to capture the next degree case, it hence suffices to understand the set $\overline{wSL(2)}_4$ of $w_4$-equivalence classes of $2$-component welded string links that are $w_3$-equivalent to $\mathbf{1}$. Note that Corollary  \ref{lem:exchange} implies that $\overline{wSL(2)}_4$ is actually an abelian group. 
\begin{proposition}\label{relations w_4}
$\overline{wSL(2)}_4$ is generated by $F_1$, $F_2$, $A$, $B$, $C$, $D$, $TO_1$ and $OT_1$. 
\end{proposition}

We will need the following technical result to prove Proposition \ref{relations w_4}. 
\begin{claim} \label{inversion1}
Let $T_1$ and $T_2$ be two $w_k$-trees for $\mathbf{1}$, which are identical except in a disk where they differ as shown below.
$$ \textrm{\includegraphics{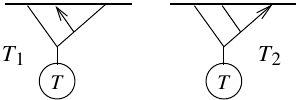}} $$
 We have 
$$\mathbf{1}_{T_1}\cdot \mathbf{1}_{T_2} \stackrel{{k+1}}{\sim} \mathbf{1}.$$ 
\end{claim}
\begin{proof} 
Let us start with the union $A\cup V$ of $w$-trees shown on the left below.  
Exchanging the tail of $A$ with the head of $V$ by move (8), one can isolate and delete $A$ by move (4). The exchange move (8) introduces a
$w_k$-tree, which can be isolated up to $w_{k+1}$-equivalence by Corollary \ref{lem:exchange}. 
More precisely, we obtain: 
$$ \textrm{\includegraphics{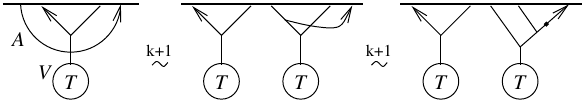}},$$
where the second equivalence is obtained by exchanging the tails of the $w_k$-tree by move (3), followed by the AS move (9) and the Twist relation (11). 

Now let us return to the the union $A\cup V$, and exchange now the head of $A$ with the adjacent tail of $V$ using  the exchange relation (12). This introduces a $w_k$-tree, which can be isolated by Corollary \ref{lem:exchange} as follows: 
$$ \textrm{\includegraphics{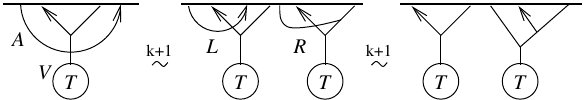}}$$
As we show below, the resulting union $L\cup R$ of $w$-trees satisfies the second equivalence above, and multiplying by the inverse of $V$ in $wSL(n)_{k+1}$ then gives the desired equivalence.

Let us turn to the union $L$ of $w$-trees. 
Exchanging both heads by move (7), we can delete the resulting $w_1$-tree by move (4). As before, using Corollary  \ref{lem:exchange} we then obtain the equivalence on the left-hand side below. 
$$ \textrm{\includegraphics{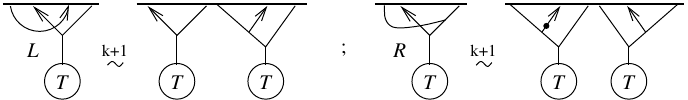}}$$
We now focus on the $w_k$-tree $R$. Using the IHX relation (13) and the AS move (9),
we have the equivalence shown on the right-hand side. Combining these equivalences for $L$ and $R$ indeed provides the desired equivalence, and the proof is complete. 
\end{proof}

\begin{proof}[Proof of Proposition \ref{relations w_4}]
Let $T$ be a $w_3$-tree for the $2$-component welded string link $\mathbf{1}$. 
Similar combinatorial considerations as in the proofs of Theorem  \ref{classiw3nbr} show that, up to $w_4$-equivalence, $T$ can be freely assumed to be a copy of one of the $w_3$-trees listed at the beginning of this section, or its inverse. 
Now, we have the following relations:  
\begin{eqnarray}
	& F' \stackrel{4}{\sim} F^{-1}&  \\        
	TO_1 \stackrel{4}{\sim} TO_2^{-1} & \quad \textrm{ and }\quad &
	TO_3 \stackrel{4}{\sim} TO_4^{-1}\\
	OT_1 \stackrel{4}{\sim} OT_2^{-1} & \quad \textrm{ and }\quad & 
	OT_3 \stackrel{4}{\sim} OT_4^{-1}\\
	A\cdot  OT_3 \stackrel{4}{\sim} B\cdot  OT_2 & \quad \textrm{ and }\quad& 
	C\cdot  TO_3 \stackrel{4}{\sim} D\cdot  TO_2.
\end{eqnarray}
Indeed, using Claim \ref{inversion1}, with $k=3$, we immediately obtain relations (6.i) for $i=1,2,3$.
Let us now prove (6.4): we only show the relation on the left-hand side, since the second relation is proved by the exact same argument. 
The strategy is very similar to the previous proofs, so we only outline the successive operations needed. 
Consider a union $U$ of a $w_1$ and $w_2$-tree, as shown on the left-hand side below. 
Applying the Head/Tail exchange relation (12) at the bottom of component $2$, we obtain the following equivalence: 
\begin{center}
\begin{tikzpicture}
	\draw[<-] [very thick] (0,2) -- (0,0);
	\draw[<-] (0,1) -- (0.5,1);
	\draw (0.5,1) -- (1,1.5);
	\draw (0.5,1) -- (1,0.5);
	\draw[->] (0,0.5) -- (1,1);
	\draw[<-] [very thick] (1,2) -- (1,0);
	
	\draw (1.5,1) node {$\stackrel{4}{\sim}$};
	\draw[<-] [very thick] (2,2) -- (2,0);
	\draw[<-] (2,1.25) -- (2.5,1.25);
	\draw (2.5,1.25) -- (3,1.75);
	\draw (2.5,1.25) -- (3,0.75);
	\draw[->] (2,0.3) -- (3,0.3);
	\draw[<-] [very thick] (3,2) -- (3,0);
	\draw (3.5,1) node {$\, \cdot\ A.$};
\end{tikzpicture}
\end{center}
On the other hand, exchanging the head and tail on component $1$ using relation (12), followed by the IHX relation (13),  gives the first equivalence below:  
\begin{center}
\begin{tikzpicture}
	\draw[<-] [very thick] (0,2) -- (0,0);
	\draw[<-] (0,1) -- (0.5,1);
	\draw (0.5,1) -- (1,1.5);
	\draw (0.5,1) -- (1,0.5);
	\draw[->] (0,0.5) -- (1,1);
	\draw[<-] [very thick] (1,2) -- (1,0);
	
	\draw (1.5,1) node {$\stackrel{4}{\sim}$};
	\draw[<-] [very thick] (2,2) -- (2,0);
	\draw[<-] (2,1) -- (2.5,1);
	\draw (2.5,1) -- (3,1.5);
	\draw (2.5,1) -- (3,0.5);
	\draw[->] (2,1.5) -- (3,1);
	\draw[<-] [very thick] (3,2) -- (3,0);
	\draw (4,1) node {$\, \cdot\ OT_2\ OT_3^{-1}$};
	
	\draw (5.5,1) node {$\stackrel{4}{\sim}$};
	\draw[<-] [very thick] (6,2) -- (6,0);
	\draw[->] (6,1.5) -- (7,1.5);
	\draw[<-] (6,0.75) -- (6.5,0.75);
	\draw (6.5,0.75) -- (7,1.25);
	\draw (6.5,0.75) -- (7,0.25);
	\draw[<-] [very thick] (7,2) -- (7,0);
	\draw (8.5,1) node {$\, \cdot\ B\ OT_2\ OT_3^{-1}$};
\end{tikzpicture}
\end{center}
The second equivalence is then obtained by using relation (12) at the top of component $2$. 
This proves that $A \stackrel{4}{\sim} B\ OT_2\ OT_3^{-1}$. 
\end{proof}

However, a complete $w_4$-equivalence classification result is at this point only accessible modulo the following: 
\begin{conjecture} \label{Conjecture}
 $$A\cdot D \stackrel{4}{\sim} B\cdot C.$$
\end{conjecture}
\begin{remark}
This conjectured relation can be seen as an analogue of relation (\ref{relationABCD}) up to $w_4$-equivalence. 
None of the invariants involved in this paper can distinguish $A\cdot D$ from $B\cdot C$. In particular, Milnor invariants $\mu(1221)$ and $\mu(2112)$ cannot resolve Conjecture \ref{Conjecture}. 
\end{remark}Ê

It is however still interesting to study welded string links up to $w_4$-equivalence modulo this conjecture, as discussed in Remark \ref{rmqpourplustard}.   

\begin{notation} \label{notaw42br}
We set the following $2$-component welded string link invariants:  
\begin{enumerate}
	\item[(i). ] $\gamma_1 = - \mathcal{I}_{(1, 2) ; 3} + \mathcal{I}_{(1) ; 3} + \mathcal{I}_{(2) ; 3} + \mu(1121)$.
	\item[(ii). ]  $\gamma_2 = \mathcal{I}_{(\overline{2}, 1) ; 3} - \mathcal{I}_{(1) ; 3} - \mathcal{I}_{(2) ; 3} + \mathcal{I}_{(1) ; 2} + \mathcal{I}_{(2) ; 2}$.
	\item[(iii). ]  $\gamma_3 = - \mathcal{I}_{(2, 1) ; 3} + \mathcal{I}_{(\overline{2}, 1) ; 3} + \mu(2212)$.
\end{enumerate}
\end{notation}

\begin{thm} \label{classiw42br}
Let $L$ be a $2$-component welded string link. Assuming Conjecture  \ref{Conjecture}, we have
$$L \stackrel{4}{\sim} L_1\cdot L_2  \cdot L_3,$$
where 
$$ 
L_1\cdot L_2 = Z_{1, 2}^{\mu_L(12)} Z_{2, 1}^{\mu_{L}(21)}\cdot E_1^{\mathcal{I}_{(1) ; 2}(L)}\cdot  E_2^{\mathcal{I}_{(2) ; 2}(L)}\cdot A_{1,2}^{\alpha_{1,2}(L)}\cdot B_{1,2}^{\beta_{1,2}(L)}\cdot C_{1,2}^{\gamma_{1,2}(L)}$$ 
as in Theorem \ref{classiw3nbr}, and where $L_3$ is given by 
\begin{align*}
	(F_1)^{\mathcal{I}_{(1) ; 3}(L) - \mathcal{I}_{(1) ; 3}(L_{12})}\cdot (F_2)^{\mathcal{I}_{(2) ; 3}(L) - \mathcal{I}_{(2) ; 3}(L_{12})}\cdot 
	(A)^{\gamma_1(L) - \gamma_1(L_{12})}\cdot (B)^{\gamma_2(L) - \gamma_2(L_{12})}\\
	 \cdot (C)^{\gamma_3(L) - \gamma_3(L_{12})}\cdot (TO_1)^{\mu_{L}(1121) - \mu_{L_{12}}(1121)}\cdot (OT_1)^{\mu_L(2212) - \mu_{L_{12}}(2212)}.
\end{align*}
\end{thm}

As before, we immediately deduce the following characterization result. 
\begin{corollary} \label{corow4}
Assuming Conjecture \ref{Conjecture}, the following are equivalent.
\begin{enumerate}
	\item Two $2$-component welded string links $L$ and $L'$ are $w_4$-equivalent;
	\item For any finite type invariant  $\nu$ of degree at most $3$, we have $\nu(L) = \nu(L')$;
	\item $L$ and $L'$ have same Milnor invariants $\mu(12)$, $\mu(21)$, $\mu(1121)$ and $\mu(2212)$, 
	and same closure invariants $\mathcal{I}_{(1, 2) ; 3}$, $\mathcal{I}_{(2, 1) ; 3}$, $\mathcal{I}_{(\overline{2}, 1) ; 3}$, $\mathcal{I}_{(1) ; 3}$, $\mathcal{I}_{(2) ; 3}$, $\mathcal{I}_{(2, 1) ; 2}$, $\mathcal{I}_{(1, \overline{2}) ; 2}$, $\mathcal{I}_{(\overline{1}, 2) ; 2}$, $\mathcal{I}_{(1) ; 2}$, $\mathcal{I}_{(2) ; 2}$.
\end{enumerate}
\end{corollary}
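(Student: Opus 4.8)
The plan is to follow the exact strategy already used for Corollaries \ref{corow2} and \ref{corow3}, now feeding on the normal form provided by Theorem \ref{classiw42br}. The implication $(1)\Rightarrow(2)$ is immediate from Proposition \ref{thm:wkFTI} applied with $k=4$: two $w_4$-equivalent welded string links share all finite type invariants of degree $<4$, that is, of degree at most $3$.

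For $(2)\Rightarrow(3)$, I would simply check that each invariant appearing in $(3)$ is a finite type invariant of degree at most $3$, so that $(2)$ forces their equality. The linking numbers $\mu(12)$ and $\mu(21)$ are of degree $1$ by Lemma \ref{Milnortypefini}; the length $4$ Milnor invariants $\mu(1121)$ and $\mu(2212)$ are of degree $3$, using the fact (recalled just after Lemma \ref{Alextypefini}) that a welded Milnor invariant of length $k$ has degree $k-1$; finally, the closure invariants $\mathcal{I}_{R;3}$ and $\mathcal{I}_{R;2}$ are of degrees $3$ and $2$ respectively, by the Corollary following Lemma \ref{Alextypefini}.

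The crux is $(3)\Rightarrow(1)$, which I would deduce from Theorem \ref{classiw42br}. The key observation is that \emph{every} exponent appearing in the normal form $L_{12}\cdot L_3$ is a function of the invariants listed in $(3)$. Indeed, the degree $\le 2$ exponents $\mu_L(12)$, $\mu_L(21)$, $\mathcal{I}_{(1);2}(L)$, $\mathcal{I}_{(2);2}(L)$, $\alpha_{1,2}(L)$, $\beta_{1,2}(L)$ and $\gamma_{1,2}(L)$ are, via Notation \ref{notaw3nbr}, explicit combinations of the degree $2$ closure invariants and linking numbers occurring in $(3)$; these in turn determine $L_{12}$ as a fixed product. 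Consequently, if $L$ and $L'$ agree on all invariants in $(3)$, they have the same $L_{12}$, hence the same correction terms $\mathcal{I}_{(i);3}(L_{12})$, $\gamma_j(L_{12})$ and $\mu_{L_{12}}(1121)$, $\mu_{L_{12}}(2212)$. Each remaining exponent of $L_3$ being the difference of a degree $3$ invariant of $L$ — all of which appear in $(3)$ — with the corresponding correction term, these too coincide for $L$ and $L'$. Thus $L$ and $L'$ share the same normal form $L_{12}\cdot L_3$, and Theorem \ref{classiw42br} yields $L\stackrel{4}{\sim}L'$.

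The bulk of the work is already contained in Theorem \ref{classiw42br}, so the corollary reduces to bookkeeping; the one point requiring care is the verification that the correction terms evaluated on $L_{12}$ are genuinely determined by the \emph{degree $\le 2$} data in $(3)$, so that the degree $3$ differences isolate precisely the listed degree $3$ invariants. This is exactly the role played by the explicit expressions for $\gamma_1,\gamma_2,\gamma_3$ in Notation \ref{notaw42br}, which guarantees that no invariant outside the list $(3)$ is needed; all three implications then assemble into the stated equivalence, under the standing assumption of Conjecture \ref{Conjecture} inherited from Theorem \ref{classiw42br}.
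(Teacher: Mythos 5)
Your proposal is correct and follows essentially the same route as the paper, which deduces Corollary \ref{corow4} from Theorem \ref{classiw42br} exactly as Corollaries \ref{corow2} and \ref{corow3} are deduced from their normal form theorems: $(1)\Rightarrow(2)$ by Proposition \ref{thm:wkFTI}, $(2)\Rightarrow(3)$ since every listed invariant has degree at most $3$, and $(3)\Rightarrow(1)$ because the exponents in the normal form $L_{12}\cdot L_3$ are determined by the invariants in $(3)$. Your careful check that the correction terms evaluated on $L_{12}$ are functions of the degree $\le 2$ data is precisely the bookkeeping the paper leaves implicit in the phrase ``we immediately deduce.''
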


\begin{proof}[Proof of Theorem \ref{classiw42br}]
By Corollary \ref{lem:exchange}, $L$  is $w_4$-equivalent to a product of terms, each obtained from $\mathbf{1}$ by surgery along a single $w_i$-tree ($i\le 3$), ordered by their degree.
Following the proofs of Theorem \ref{classiw3nbr}, we can further assume that 
$L \stackrel{4}{\sim} L_1 \cdot L_2 \cdot \tilde{L_3}$, 
where $L_1$ and $L_2$ are as given in the statement and where $\tilde{L_3}$ is a product of terms, each obtained from $\mathbf{1}$ by surgery along a single $w_3$-tree. 
Proposition \ref{relations w_4}, then gives us that  
$$
 L \stackrel{4}{\sim} L_1\cdot L_2\cdot A^a B^b C^c (TO_1)^t (OT_1)^u (F_1)^{e_1} (F_2)^{e_2},
$$
for some coefficients $a, b, c, t, u, e_1$ and $e_2$ in $\mathbb{Z}$, that we must now determine.   
We have the following evaluations of our invariants: 
$$ \setcounter{MaxMatrixCols}{12}
\begin{pmatrix}
\smallsetminus & A & B & C & TO_1 &  OT_1 & F_1 & F_2 \\
\mathcal{I}_{(\overline{2}, 1) ; 3} : & 0 & 1 & 0 &  0 & 0 & 1 & 1 \\
\mathcal{I}_{(2, 1) ; 3} :            & 0 & 1 & -1 & 0 & 1 & 1 & 1 \\
\mathcal{I}_{(1, 2) ; 3} :            & -1 & 0 & 0 & 1 & 0 & 1 & 1 \\
\mu(1211) :                          & 0 & 0 & 0  & -2 & 0 & 0 & 0 \\
\mu(2122) :                          & 0 & 0 & 0  & 0 & -2 & 0 & 0 \\
\mathcal{I}_{(1) ; 3} : & 0 & 0 & 0 &  0 & 0 & 1 & 0 \\
\mathcal{I}_{(2) ; 3} :            & 0 & 0 & 0 & 0 & 0 & 0 & 1 \\
\end{pmatrix}$$
Note that this matrix has rank $7$. Thanks to the additivity properties of closure invariants (Proposition \ref{clÃŽture et additivitÃ©}) 
and of welded Milnor invariants (\cite[Lem.~6.11]{Meilhan-Yasuhara}), determining the above coefficients then essentially amounts to computing the inverse matrix. 
(Note that here, unlike in Theorem \ref{classiw3nbr}, we do not make explicit the evaluations of our invariants on the degree $\le 2$ part $L_{1}\cdot L_{2}$, 
as it is not necessary for deriving Corollary \ref{corow4}.) 
Details can be found in \cite{Casejuane} and are left to the reader.
\end{proof}

\begin{remark} \label{rmqpourplustard}
Corollary \ref{corow4} suggests that, unlike in the degree $2$ case, one cannot generate the space of degree $3$ finite type invariants of welded string links by only closure invariants. 
As a matter of fact, further computations show that one cannot replace the classifying invariants  $\mu(1121)$ and $\mu(2212)$ by any combination of the closure invariants $\mathcal{I}_{R;3}$ with $R$ a list of length $\le 2$. 
\end{remark}

\section{Application to ribbon knotted surfaces} \label{sec:ribbon}

As mentioned in the introduction, one of the main features of welded theory is that it can serve as a tool for the study of certain surfaces in $4$-space, called ribbon surfaces. 
As a matter of fact, all definitions and main results given in this paper for welded string links, do translate naturally to topological results. This relies on the so-called \emph{Tube map}, due by Satoh \cite{Satoh2000}, which is a surjective map from welded knotted objects to ribbon knotted surfaces. In the context of this paper, this map is a surjective  monoid homomorphism
 $$\textrm{Tube: }wSL(n)\longrightarrow \textrm{rT}(n),$$
where  $\textrm{rT}(n)$ is the monoid of $n$-component \emph{ribbon tubes}, up to isotopy fixing the boundary, with composition given by stacking. 
We shall not recall the precise definition of ribbon tubes here, but rather refer the reader to \cite[\S~2.1]{Audoux-Bellingeri-Meilhan-Wagner2018} for a detailed treatment. Likewise, we refer to \cite{Satoh2000} or \cite[\S~3.3]{Audoux-Bellingeri-Meilhan-Wagner2018} for  the definition of the Tube map. 

A key property of ribbon knotted surfaces is that they admit a finite type invariants theory, which was developed in \cite{Habiro-Kanenobu-Shima, KS}. The definition is strictly the same as Definition \ref{def:fti}, with the role of the virtualization move now played by the \emph{crossing change at crossing circles}:
$$  \dessin{1.8cm}{SingCir_1}\,\, \longleftrightarrow\,\,
      \dessin{1.8cm}{SingCir_2}
$$
\noindent By \cite{Yajima}, any ribbon knotted surface admits a diagram where the only crossings are along 'crossing circles' of double points, as shown in the above figure, and the local move swaps the over/under information at this circle.  As observed in \cite{Audoux-Bellingeri-Meilhan-Wagner2018}, if two welded string links differ by a virtualization move, then their images by the Tube map differ by a crossing change at a crossing circle.
By definition, if $v$ is a welded string link invariant, that extends to an invariant $v^{(4)}$ of ribbon tubes 
in the sense that $v^{(4)}(\textrm{Tube}(L)) = v(L)$ for any $L\in wSL(n)$, and if $v$ is a finite type invariant of degree  $k$, then so is $v^{(4)}$. 
Note that this observation applies to all closure invariants and all Milnor invariants, owing to the fact that the Tube map induces an isomorphism from the welded group of $L$ to the fundamental group of the exterior of Tube$(L)$, which preserves peripheral elements (meridians and preferred longitudes) from which these invariants are extracted, see \cite[Sec.~2.2.1]{Audoux-Bellingeri-Meilhan-Wagner2018}. 

Finally, recall that Watanabe introduced in \cite{Watanabe} the \emph{$RC_k$-equivalence} for ribbon knotted surfaces, and showed that two $RC_k$-equivalent ribbon surfaces cannot be distinguished by finite type invariants of degree $< k$. We shall not recall here the definition of $RC_k$-equivalence, but only note the following fact (see \cite{Meilhan-Yasuhara}): two welded string links that are $w_k$-equivalent, have $RC_k$-equivalent images by the Tube map. 

Combining the above facts on the Tube map with the results of this paper, has several concrete consequences for ribbon tubes. 
Using the surjectivity and additivity of the Tube map, we have the following from the results of Section \ref{sec:grooop}.
\begin{corollary}
The set $\textrm{rT}(n) _k$ of $RC_k$-equivalence classes of $\textrm{rT}(n)$, is a finitely generated group. This group is abelian if and only if $k=1$ or $n=1$. 
\end{corollary}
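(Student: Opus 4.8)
The plan is to transfer the three corresponding statements about $wSL(n)_k$ from Section \ref{sec:grooop} across the Tube map, using its surjectivity and monoid-homomorphism property together with the key compatibility fact that $w_k$-equivalent welded string links have $RC_k$-equivalent images. First I would make precise the relationship between the two equivalence relations: the Tube map descends to a surjective map $\overline{\textrm{Tube}}\colon wSL(n)_k \to \textrm{rT}(n)_k$, since $L \stackrel{k}{\sim} L'$ implies $\textrm{Tube}(L)$ and $\textrm{Tube}(L')$ are $RC_k$-equivalent (the quoted fact from \cite{Meilhan-Yasuhara}), and since the original Tube map is surjective onto $\textrm{rT}(n)$.

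To establish that $\textrm{rT}(n)_k$ is a group, I would first observe that $\textrm{rT}(n)$ is a monoid under stacking, and that $RC_k$-equivalence respects stacking (this should follow from the local nature of both the stacking product and the $RC_k$-moves, so that stacking descends to a well-defined binary operation on $\textrm{rT}(n)_k$). The surjection $\overline{\textrm{Tube}}$ is then a monoid homomorphism onto $\textrm{rT}(n)_k$. Since $wSL(n)_k$ is a finitely generated group by Theorem \ref{thm:fgg}, its image under a surjective monoid homomorphism is a finitely generated group as well: inverses are carried to inverses, and a generating set maps to a generating set. This simultaneously yields both the group structure and the finite generation of $\textrm{rT}(n)_k$.

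For the abelianity dichotomy, I would argue in two directions. When $k=1$ the group $wSL(n)_1$ is trivial (as recalled from \cite[\S~7.2]{Meilhan-Yasuhara}), hence its quotient image $\textrm{rT}(n)_1$ is trivial, in particular abelian; when $n=1$ the group $wSL(1)_k$ is abelian by \cite[Cor.~8.8]{Meilhan-Yasuhara}, and since the homomorphic image of an abelian group is abelian, $\textrm{rT}(1)_k$ is abelian too. For the converse, I would use Lemma \ref{lem:nonab}: for $n\ge 2$ and $k\ge 2$ the welded string links $D$ and $D'$ are distinguished by the closure invariant $\mathcal{I}_{(1,2);2}$, and this invariant is preserved by the Tube map ($\mathcal{I}_{R;k}(L) = \mathcal{I}_{R;k}(\textrm{Tube}(L))$). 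The main obstacle is that the non-abelianity proof of Lemma \ref{lem:nonab} shows $D \not\stackrel{2}{\sim} D'$ rather than directly exhibiting non-commuting elements, so I must verify that the concrete witnesses of non-commutativity in $wSL(n)_k$ have images that still fail to commute in $\textrm{rT}(n)_k$. Concretely, I expect the non-abelianity to be witnessed by a commutator whose image is detected by a Tube-invariant finite type invariant (built from closure invariants), so that the non-triviality of that commutator persists after applying $\overline{\textrm{Tube}}$; confirming that such a detecting invariant survives the passage to $\textrm{rT}(n)_k$ is the step requiring the most care.
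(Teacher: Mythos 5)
Your proposal is correct and is essentially the paper's own (very terse) argument: the paper deduces this corollary in one line from the surjectivity and multiplicativity of the Tube map, transferring Theorem \ref{thm:fgg} and Lemma \ref{lem:nonab} exactly as you do. The one step you flag as delicate---that non-commutativity survives passage to the quotient $\textrm{rT}(n)_k$---is settled by facts the paper records just before the corollary: the closure invariant $\mathcal{I}_{(1,2);2}$ distinguishing the witnesses $D$ and $D'$ of Lemma \ref{lem:nonab} satisfies $\mathcal{I}_{R;k}(L)=\mathcal{I}_{R;k}(\textrm{Tube}(L))$, extends to a degree $2$ finite type invariant of ribbon tubes, and is therefore an $RC_k$-equivalence invariant by Watanabe's result, so $\textrm{Tube}(D)$ and $\textrm{Tube}(D')$ remain distinct in $\textrm{rT}(n)_k$.
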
 
The characterization of degree $<3$ finite type invariants of ribbon tubes, stated in Theorem \ref{thmain}, likewise follows immediately from Corollary \ref{corow3}. 
 
Parallel to Conjecture \ref{conjek}, this result in low degree raises the following.
\begin{conjecture}
Two ribbon tubes are $RC_k$-equivalents if and only if they cannot be distinguished by finite type invariants of degree $<k$. 
\end{conjecture}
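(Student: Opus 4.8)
The plan is to deduce the conjecture from its welded counterpart, Conjecture~\ref{conjek}, by transporting the relevant information across the Tube map, mirroring the strategy used throughout the paper. The reverse implication is already in hand: since ribbon tubes are in particular ribbon knotted surfaces, Watanabe's theorem \cite{Watanabe} ensures that $RC_k$-equivalent ribbon tubes share all finite type invariants of degree $<k$. Only the forward implication requires work.

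For the forward implication, let $T$ and $T'$ be ribbon tubes that cannot be distinguished by finite type invariants of degree $<k$; we may assume $T\neq T'$, the equal case being trivial. By surjectivity of the Tube map, write $T=\textrm{Tube}(L)$ and $T'=\textrm{Tube}(L')$ for welded string links $L,L'$. The key step is to promote the hypothesis to the welded level, i.e.\ to show that $L$ and $L'$ share all degree $<k$ finite type invariants of welded string links. Granting this, Conjecture~\ref{conjek} gives $L\stackrel{k}{\sim}L'$, and since $w_k$-equivalent welded string links have $RC_k$-equivalent images under the Tube map, one concludes that $T$ and $T'$ are $RC_k$-equivalent.

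The transfer of finite type information is where the content lies. Every degree $d$ finite type invariant $v^{(4)}$ of ribbon tubes pulls back, by the compatibility of the Tube map with the virtualization / crossing-circle correspondence recorded in Section~\ref{sec:ribbon}, to a degree $d$ finite type invariant $v^{(4)}\circ\textrm{Tube}$ of welded string links; hence $L$ and $L'$ automatically agree on every such pulled-back invariant. The step I expect to be the main obstacle is to upgrade this to agreement on \emph{all} welded invariants of degree $<k$. Since the Tube map is not injective, a welded finite type invariant could in principle separate the lifts $L$ and $L'$ without descending to ribbon tubes, and ruling this out amounts to showing that the pulled-back invariants already separate welded string links as finely as all degree $<k$ invariants do — equivalently, that the Tube map induces a bijection between welded string links and ribbon tubes modulo sharing finite type invariants of degree $<k$.

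In low degrees $k\in\{2,3\}$ this obstacle does not arise and the conjecture holds unconditionally: the explicit normal forms of Theorems~\ref{classiw2nbr} and~\ref{classiw3nbr} show that the separating invariants are closure invariants, welded linking numbers and length $3$ Milnor invariants, all of which are preserved by the Tube map as recorded in Section~\ref{sec:ribbon}. One may therefore run the argument above directly from Corollaries~\ref{corow2} and~\ref{corow3}, exactly as for the introductory Theorem. A proof in arbitrary degree would instead have to combine a resolution of Conjecture~\ref{conjek} with precise control of the Tube map at the level of finite type invariants.
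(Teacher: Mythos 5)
The statement you were asked to prove is not proved in the paper at all: it is stated there as an open conjecture, offered in parallel to Conjecture~\ref{conjek}, and the only evidence the paper provides is that its low-degree results verify the cases $k=2,3$. Your proposal, as you yourself acknowledge, is therefore not a proof but a reduction with two genuine gaps. First, it invokes Conjecture~\ref{conjek}, which is itself open for welded string links of more than one component (it is established only for welded knots and long knots, and for all $n$ only in degrees $k=2,3$ via Corollaries~\ref{corow2} and~\ref{corow3}); a proof resting on it is conditional. Second, your transfer step --- upgrading agreement of $T=\textrm{Tube}(L)$ and $T'=\textrm{Tube}(L')$ on all ribbon finite type invariants of degree $<k$ to agreement of the lifts $L$ and $L'$ on all \emph{welded} finite type invariants of degree $<k$ --- is precisely the descent problem created by the non-injectivity of the Tube map, and you flag it as the main obstacle without resolving it. Since the pulled-back invariants $v^{(4)}\circ\textrm{Tube}$ need not a priori separate welded string links as finely as the full space of degree $<k$ welded invariants does, the forward implication does not follow even granting Conjecture~\ref{conjek}.

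What you do get right is the architecture, and it matches how the paper handles what it can prove. The reverse implication via Watanabe's result is correct; the fact that $w_k$-equivalence pushes forward to $RC_k$-equivalence under Tube is exactly the paper's stated key fact; and your unconditional treatment of $k\in\{2,3\}$ is essentially the paper's own derivation of its introductory Theorem from Corollaries~\ref{corow2} and~\ref{corow3}: the classifying invariants there are welded linking numbers and closure invariants, all compatible with the Tube map, so the descent problem is vacuous in those degrees. (One small correction: length $3$ Milnor invariants are not among the classifying invariants for $k=3$; by Proposition~\ref{mu_123}, $\mu(123)$ is itself a combination of linking numbers and closure invariants, which is rather the point of that proposition.) In short, your write-up is an accurate map of where the difficulty lies, but the statement remains a conjecture, for the paper and for you alike.
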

Of course, we also have analogues of the normal form result, Theorem \ref{classiw3nbr}, and of the results of subsection \ref{sec:A}, that we shall not state here explicitly.

\begin{acknowledgments}
The authors are indebted to the referee for numerous useful comments. 
This work is partially supported by the project AlMaRe (ANR-19-CE40-0001-01) of the ANR. 
\end{acknowledgments}

\bibliographystyle{plain}
\bibliography{Bibliographie}
\end{document}